\numberwithin{equation}{section}
\newtheorem{theorem}{Theorem}[section]
\newtheorem{lemma}[theorem]{Lemma}
\newtheorem{corollary}[theorem]{Corollary}
\theoremstyle{definition}
\newtheorem{definition}[theorem]{Definition}
\theoremstyle{remark}
\newtheorem{remark}[theorem]{Remark}
\definecolor{myorange}{rgb}{0.9, 0.55, 0.3}
\definecolor{mygreen}{rgb}{0.35, 0.71, 0.0}
\definecolor{mybrown}{rgb}{0.63, 0.32, 0.18}
\renewcommand{\vec}{\vv}
\newcommand{\QQ}{\mathbb{Q}}
\newcommand{\ZZ}{\mathbb{Z}}
\newcommand{\FF}{\mathbb{F}}
\newcommand{\End}{\operatorname{End}}
\newcommand{\Div}{\operatorname{Div}}
\newcommand{\Ker}{\operatorname{Ker}}
\newcommand{\Hom}{\operatorname{Hom}}
\renewcommand{\div}{\operatorname{div}}
\keywords{elliptic curves, division polynomials, kernel polynomials, elliptic divisibility sequence, isogenies, elliptic functions}
\subjclass[2020]{Primary: 11G05 11B37 14H52 Secondary: 11B39 11R37 14K02}
\title{Division polynomials for arbitrary isogenies}
\author[Stange]{Katherine E. Stange}
\address{University of Colorado Boulder, Boulder, Colorado, USA}
\email{kstange@math.colorado.edu}
\urladdr{https://math.katestange.net/}
\begin{document}
\maketitle

\newcommand{\Psis}{\Psi^{\sqrt{\;}}}.

\begin{abstract}
Following work of Mazur-Tate and Satoh, we extend the definition of division polynomials to arbitrary isogenies of elliptic curves, including those whose kernels do not sum to the identity.  In analogy to the classical case of division polynomials for multiplication-by-n, we demonstrate recurrence relations, identities relating to classical elliptic functions, the chain rule describing relationships between division polynomials on source and target curve, and generalizations to higher dimension (i.e., elliptic nets).
\end{abstract}

\section{Introduction}

Given an elliptic curve $E$ with identity $\mathcal{O}$, and a positive integer $n$ with associated multiplication-by-$n$ map $[n]:E \rightarrow E$, the $n$-th division polynomial $\Psi_n$ is an elliptic function on $E$ with divisor
\begin{equation}
  \label{eqn:mazurtate}
  [n]^*(\mathcal{O}) - n^2(\mathcal{O}).
\end{equation}
We typically set $\Psi_0 = 0$ and $\Psi_{-n} = -\Psi_n$.  
Sometimes these are called \emph{Weber polynomials} \cite{Bostan}.
These are furthermore normalized so that they satisfy a recurrence relation
\begin{equation}
	\label{eqn:ellrec}
\Psi_{p + q} \Psi_{p- q}\Psi_{r}^2 \\
+ \Psi_{q+ r}\Psi_{ q- r}\Psi_{ p}^2 \\
+ \Psi_{ r + p} \Psi_{ r- p} \Psi_{ q}^2 = 0,
\end{equation}
or the more general \cite{KateANT}
\begin{equation}
	\label{eqn:ellrec-gen}
\Psi_{p + q + s} \Psi_{p- q}\Psi_{r+ s}\Psi_{r} \\
+ \Psi_{q+ r+ s}\Psi_{ q- r}\Psi_{ p+ s}\Psi_{ p} \\
+ \Psi_{ r + p+ s} \Psi_{ r- p} \Psi_{ q+ s}\Psi_{ q} = 0,
\end{equation}
for all $p,q,r,s \in \ZZ$.  This doesn't completely specify the sequence:  $\Psi_{n}$ can be multiplied by a factor of the form $\alpha^{n^2}\beta$ for scalars $\alpha$ and $\beta$.  Under the normalization we will use throughout, the first few division polynomials, in terms of a Weierstrass curve $y^2 = x^3 + ax + b$, are:
\begin{align*}
	\Psi_1 &= 1,  \quad
	\Psi_2 = -2y, \quad 
	\Psi_3 = 3x^4 + 6a x^2 + 12bx - a^2, \\
	\Psi_4 &= -4y \cdot \left( x^6 +5ax^4 + 20bx^3 - 5a^2x^2 - 4abx - 8b^2 - a^3 \right),
\end{align*}
from which the rest follow by \eqref{eqn:ellrec} or \eqref{eqn:ellrec-gen}.

The recurrence allows for efficient computation, with $O(\log n)$ applications of \eqref{eqn:ellrec-gen} to compute $\Psi_n$ from the initial terms $\Psi_1, \Psi_2, \Psi_3, \Psi_4$.  Ward \cite{Ward} showed in 1948 that integer sequences satisfying \eqref{eqn:ellrec} are essentially those of the form $\Psi_n(P)$ for some point $P$ on an elliptic curve $E$; the curve coefficients and point coordinates can be recovered from the integer sequence as polynomials in the initial terms.  Such integer sequences are known as \emph{elliptic divisibility sequences}.

There are three traditionally important properties of the division polynomials:
\begin{enumerate}
	\item \textbf{Chain rule}.  The $\Psi_n$ satisfy
		\[
			\Psi_{nm} = \left( \Psi_n \circ [m] \right) \Psi_m^{n^2}.
		\]
	\item \textbf{Relation to $x$}.  Letting $x$ be the $x$-coordinate in the Weierstrass form, the $\Psi_n$ satisfy
		\[
			\frac{\Psi_{n+m}\Psi_{n-m}}{\Psi_n^2\Psi_m^2} = x \circ [m] - x \circ [n].
		\]
	\item \textbf{Recurrence relations}.  The $\Psi_n$ satisfy \eqref{eqn:ellrec} as a consequence of the relation to $x$, as well as the more general \eqref{eqn:ellrec-gen}.
\end{enumerate}

Later, Ward studied a generalization to the case of complex multiplication by the Gaussian integers \cite{WardGaussian}.  Just as in his original memoir, he worked in the complex case, defining $\Psi_\alpha := \sigma(\alpha z)/\sigma(z)^{N(\alpha)}$ in terms of the Weierstrass sigma function for $\alpha \in \ZZ[i]$.  These still satisfy the recurrence relation but are not elliptic functions in all cases, so he defines an adjustment which is elliptic but no longer satisfies the recurrence, by scaling by $\sqrt{\wp(z)}$.  Durst studies the Eisenstein case \cite{Durst}.

In 1991, in an appendix to their work on the $p$-adic sigma function, Mazur and Tate defined division polynomials more generally \cite[Appendix I]{MazurTate} as follows.  For any isogeny $\phi: E \rightarrow E'$ for which the divisor
\[
  \phi^*(\mathcal{O}) - (\deg \phi) (\mathcal{O})
\]
is principal, we write $\Psi_\phi$ for a function with this divisor, which we will call the division polynomial for $\phi$.  To set the scalar normalization, let $t$ and $t'$ be uniformizers at the identities $\mathcal{O}$ and $\mathcal{O}'$ for $E$ and $E'$ respectively, and let $\omega$ and $\omega'$ be invariant differentials.  One requires
\[
	\frac{ t^{\deg \phi} \Psi_\phi }{t' \circ \phi} (\mathcal{O}) =  \left( \frac{dt}{\omega} (\mathcal{O}) \right)^{\deg \phi} \left( \frac{dt'}{\omega'} (\mathcal{O}') \right)^{-1}.
\]
This is independent of the choice of $t$ and $t'$ but depends on $\omega, \omega'$.  
As an example, suppose $\phi = [2]$ on $y^2 = x^3 + ax + b$, where $\omega = dx/2y$.  Around $\mathcal{O}$, we can choose $t = t' = -x/y$, with $x = t^{-2} + \cdots$ and $y = -t^{-3} + \cdots$.   
We have $t \circ [2] = 2 t + \cdots$ and $(dt/\omega) (\mathcal{O}) = 1$.  Then, the requirement above becomes $\Psi_2 =  2 t^{-3} + \cdots$.  Thus, combining normalization and divisor, $\Psi_2 = -2y$.  

The requirement of principality, i.e., the sum of the geometric points in the kernel being trivial, is restrictive: the isogenies for which the sum of points is non-zero are exactly those which are cyclic of even degree, with odd inseparable degree.  For example, the endomorphisms $1+i$, $1+\sqrt{-5}$, $\sqrt{-2}$ etc. do not have well-defined division polynomials (see also \cite[Lemma 2.2]{Satoh}).  
For CM by $\ZZ[i]$, these are exactly the cases where Ward involves $\sqrt{\wp(z)}$.

Mazur and Tate show that for $\Psi_\phi$ which are defined, we obtain the three usual properties:  a recurrence relation, the chain rule, and the relation to the $x$-coordinate.

For an isogeny $\phi$, we may also define the closely related \emph{kernel polynomial}
\[
  k_\phi(X) :=  \prod_{\mathcal{O} \neq R \in E[\phi]/\{ \pm 1 \}} (X - x(R)).
\]
For $\phi$ of odd degree, this is again the division polynomial, but for even degree, the multiplicities of two-torsion points may differ; e.g., $\div(\Psi_2) = \div(y) = \sum_{\mathcal{O} \neq R \in E[2]} (R) - 3(\mathcal{O})$ but $\div(k_{[2]}) = 2 \div(\Psi_2)$.  In SageMath, the command \verb|division_polynomial| comes with the option \verb|two_torsion_multiplicity| to choose amongst the variations \cite{SageMath}.

Schoof shows how to compute the kernel polynomial from knowledge of the image curve, the degree, and the sum of the $x$-coordinates of the kernel points (which specifies the second coefficient of the polynomial), by means of Taylor series expansions of Weierstrass functions \cite{Schoof}.  The kernel polynomial's roots can be used to compute the isogeny itself \cite[Equation (8)]{Velu}, and its coefficients determine the target curve \cite[Equation (11)]{Velu}.  The kernel polynomial can be computed by an algorithm of Stark based on continued fraction expansions \cite{Stark}.  For more background on the isogeny computation problem, see \cite{Bostan}.
Evaluations of kernel polynomials can be used to compute the values of isogenies \cite{Faster}.  
In the theory of complex multiplication, ray class fields can be generated over Hilbert class fields by division and kernel polynomials.  
Motivated by this connection, K\"u\c{c}\"uksakalli studied kernel polynomials (calling them generalized division polynomials), and gave a method to compute them using Newton identities and Hurwitz numbers \cite{Omer}.  None of these methods approach the problem using recurrence relations like \eqref{eqn:ellrec}.

In 2004, Satoh independently defined generalized division polynomials for endomorphisms, and studied their computational properties \cite{Satoh}.  Again, one is restricted to the case that the geometric points of the kernel sums to zero; Satoh called such endomorphisms \emph{unbiased}.  In this case, the normalization condition is given in terms of the uniformizer $T = -x/y$ at $\mathcal{O}$ by specifying the leading coefficient of a formal series expansion:
\[
	\Psi_\alpha = (-1)^{N(\alpha)-1}\alpha T^{-N(\alpha)+1} + \cdots.
\]
Again, Satoh recovers the three basic properties of recurrence, chain rule and relation to $x$.  Observe again that Satoh's normalization gives $\Psi_2 = -2y$, agreeing with Mazur-Tate for $y^2 = x^3 + ax + b$ and $\omega = dx/2y$.  As for the history of normalizations, Weber \cite[pp.197-199]{Weber} normalizes so that $\Psi_{2n}$ has lead term $-2nyx^{2n^2-2}$; Cassels \cite{Cassels} chose to change this to a plus sign, which is the most common convention today, e.g. \cite{Sil1}.  Mazur and Tate's convention, equivalently that of Satoh, which we use here, changes the sign back to Weber's convention, at least assuming we take $\omega=dx/(2y+a_1x+a_3)$ for a Weierstrass model \cite[p.682]{MazurTate}.

Over $\QQ$, a very closely related (but not exactly equivalent) definition of an elliptic divisibility sequence is as the sequence of denominators of $[n]P$, $n \ge 0$.  The distinction goes back to Ward \cite{Ward}, but see for example \cite{Verzobio} for modern details on when and how these two definitions differ.  In 2008, Streng \cite{Streng} generalized this alternate definition to curves with complex multiplication, in order to prove a generalization of a property due to Silverman \cite{Sil3} for elliptic divisibility sequences:  that every term has a primitive divisor, that is, a prime divisor not appearing as a divisor earlier in the sequence.  Streng's definition generalizes terms from numbers to ideals.

In 2008, the author generalized elliptic divisibility sequences and division polynomials to elliptic nets \cite{KateANT}.  Net polynomials are polynomials in the coefficients of $E$ and \emph{several} points $P_i$:  the net polynomial $\Psi_{a_1, \ldots, a_n}$ will vanish when $\sum a_i P_i = \mathcal{O}$.

The purpose of this note is to show that the restriction that the sum of points in the kernel be trivial can be circumvented.  We can define division polynomials $\Psi_\phi$ attached to arbitrary isogenies $\phi$, and they satisfy analogues of the three main properties:  chain rule, relation to $x$, and recurrence relation.  Some adjustments to the statements are needed. 

The fundamental idea is to replace the Mazur-Tate divisor with
\[
  D_\phi = \phi^*(\mathcal{O}')
	 {- (\deg\phi) (\mathcal{O})}
+ {(P_\phi)
	- (\mathcal{O})},
\]
where $P_\phi \in E[2]$ is the sum of the kernel of $\phi$.  Then we define an appropriately normalized elliptic function $\Psi_\phi$ having this divisor.  This necessitates a great deal of wrangling of two-torsion points and isogenies of degree $2$.  In particular, the normalization of division polynomials is delicate (the recurrences depend on it), and our case is no exception.  The normalization in the general case is taken with respect to a fixed collection of isogenies of degree $2$.  Aspects of this approach are reminiscent of the theory of theta characteristics and syzygetic triples \cite[Section 1]{Farkas}.

To facilitate all this wrangling, we consider some generalities about \emph{kernel divisors} formed as linear combinations of divisors of the form $\phi^*(\mathcal{O}')$, and, for principal kernel divisors, appropriately normalized \emph{kernel functions}.  We do only what is needed here, but there may be a more general theory available (Section~\ref{sec:kernel}).

One of the interesting waypoints appears in the form of Lemma~\ref{lemma:squareroot}, which turns on the fact that kernel sums play well with the cube law of quadratic forms.  It states that certain `quadratic combinations' of functions supported on two-torsion are squares.

Since the recurrence relations necessitate new factors in our setting, the $\Psi_\phi$ themselves do not form an elliptic divisibility sequence.  However, we demonstrate that specializations to a point $\Psi_\phi(P)$ can recover elliptic divisibility sequences on the target curve (Theorem~\ref{thm:recover}). 

In Section~\ref{sec:examples}, we give some examples.  These demonstrate the three main properties in various contexts, including supersingular and inseparable examples.  In the Gaussian  complex multiplication case, we compare our definitions to Ward's.  

Finally, we generalize such generalized division polynomials -- perhaps `isogeny polynomials' is a better name -- to higher dimension.  That is, we define such things on products of $E$, in analogy to elliptic nets (Section~\ref{sec:higher}).

\subsection*{Acknowledgements.}  Thank you to Joseph Macula and very helpful anonymous referees for feedback on an earlier draft.  The author would like to thank her colleague David Grant for suggesting the footnote, the history of division polynomial normalization, and the alternate proof of Theorem~\ref{thm:relationtox}.

\section{Definitions}

Throughout the paper, we will assume the base field is perfect, algebraically closed, and of characteristic not two.

\subsection{Biased and unbiased isogenies}
Generalizing Satoh, we call a non-zero isogeny $\phi: E \rightarrow E'$ \emph{unbiased} if its kernel elements sum to the identity with multiplicity \cite[Section 2]{Satoh}.  In other words, the map $\sum_i n_i (P_i) \mapsto \sum_i n_i P_i$ taking $\Div(E)$ to $E$ takes $\phi^*(\mathcal{O}')$ to $\mathcal{O}$.  Equivalently, the divisor
\[
  \phi^*(\mathcal{O}') - (\deg \phi) (\mathcal{O})
\]
is principal.  By convention, the zero isogeny, which we will write as $[0]$ to avoid confusion, is unbiased.

A non-zero isogeny is \emph{biased} if its kernel sum is non-trivial; in this case, it must sum to a non-trivial two-torsion point (all points of higher order are distinct from their inverses and will cancel one another).  We will denote this point $P_\phi$.
For a biased isogeny $\phi: E \rightarrow E'$, the divisor
\[
  \phi^*(\mathcal{O}') - (\deg \phi) (\mathcal{O})
\]
is no longer principal.  There are three `nearby' principal divisors which may prove useful:
\[
  2 \left( \phi^*(\mathcal{O}') - (\deg \phi) (\mathcal{O}) \right), \quad
    [2]^* \left( \phi^*(\mathcal{O}') - (\deg \phi) (\mathcal{O}) \right), \quad \text{and} \quad
  D_\phi := \phi^*(\mathcal{O}')
	 {- (\deg\phi) (\mathcal{O})}
+ {(P_\phi)
	- (\mathcal{O})}.
    \]
    This last, denoted $D_\phi$, may for applications be the `best' replacement for $ 
    \phi^*(\mathcal{O}') - (\deg \phi) (\mathcal{O})$, in the sense that it is the minimal modification.  But the first two will also have a role to play\footnote{These three modifications correspond respectively, in the arithmetic setting of a non-principal ideal in a number field, to taking a principal power; extending to a field where the ideal becomes principal; and multiplying by a fixed representative of the inverse ideal class.}:  see Remark~\ref{rem:tilde} and the alternate proof to Theorem~\ref{thm:relationtox}.

\subsection{Three isogenies of degree two}
Write $P_0 = \mathcal{O}, P_1, P_2, P_3$ for the points in $E[2]$.  For each $i=1,2,3$, let $g_i : E \rightarrow E_i$ denote a fixed isogeny of degree $2$ with kernel $\{ \mathcal{O}, P_i \}$.  Let $g_0$ denote the identity isogeny on $E_0 := E$.  Let $t$ and $t_i$ be uniformizers at the identities on $E$ and $E_i$ respectively.  Let $\omega$ and $\omega_i$ be invariant differentials on $E$ and $E_i$ respectively.  And finally, write $\mathcal{O}$ for the identity on $E$ and $\mathcal{O}_i$ for the identity on $E_i$.  By convention, choose $t_0=t$, and $\omega_0=\omega$.

Let $\phi: E \rightarrow E'$ be a non-zero isogeny.  Let $P_\phi \in E[2]$ be the sum of the points in the kernel of $\phi$.
Write $\iota(\phi)$ for the $i \in \{0,1,2,3\}$ such that $P_\phi = P_i$.
Then we have or define
\[
	P_\phi = P_{\iota(\phi)},\quad g_\phi := g_{\iota(\phi)}, \quad  t_\phi := t_{\iota(\phi)},\quad \omega_\phi := \omega_{\iota(\phi)},\quad \mathcal{O}_\phi := \mathcal{O}_{\iota(\phi)}.
\]
Observe that $\iota(\phi)$ depends only on $\phi|_{E[2]}$,  
since $P_\phi = \sum_{P \in E[\phi]} P = \sum_{P \in E[\phi] \cap E[2]} P$.  By convention, we also set 
\[
E_{[0]} := E, \quad P_{[0]} := \mathcal{O}, \quad g_{[0]} := [1], \quad t_{[0]} := t, \quad \omega_{[0]} := \omega.
\]

\subsection{The elliptic function $\Psi_\phi$}
\label{sec:psi}
We now extend the definition of division polynomials to arbitrary (possibly biased) isogenies.
Let $\phi: E \rightarrow E'$ be non-zero, let $t'$ be a uniformizer at the identity and $\omega'$ an invariant differential for $E'$.  Let $\mathcal{O}'$ be the identity of $E'$.
In analogy to Mazur and Tate, define an elliptic function $\Psi_\phi$ with the principal divisor
\begin{equation}
  \label{eqn:Dphi}
  D_\phi = \phi^*(\mathcal{O}')
	 {- (\deg\phi) (\mathcal{O})}
+ {(P_\phi)
	- (\mathcal{O})},
\end{equation}
where we require, as a means of normalization, 
that
\begin{equation}
	\label{eqn:norm-arb}
	\frac{ t^{\deg \phi+\deg g_\phi} \Psi_\phi }{(t_\phi \circ g_\phi)(t' \circ \phi)} (\mathcal{O}) 
	=  
	\left( \frac{dt}{\omega} (\mathcal{O}) \right)^{\deg \phi+\deg g_\phi} 
	\left( \frac{dt'}{\omega'} (\mathcal{O}') \right)^{-1}
	\left( \frac{dt_\phi}{\omega_\phi} (\mathcal{O}_\phi) \right)^{-1}.
\end{equation}
One observes that this is independent of $t, t', t_\phi$ but depends on $\omega, \omega', \omega_\phi$.  As an example, for $E: y^2 = x^3 + x$ with complex multiplication by $\ZZ[i]$, $\Psi_{1+i} = 2ix$ has divisor $2(0,0) - 2(\mathcal{O})$ (examples are given in Section~\ref{sec:examples}).

By (somewhat paradoxical) convention, we set $\Psi_{[0]} = \Psi_0 = 0$ and $D_{[0]} = \emptyset$.

\subsection{The elliptic function $\widehat{\Psi}_\phi$}
We need some auxiliary functions that will `correct' the recurrence equations for biased isogenies.
Define $\widehat{\Psi}_i := \Psi_{g_i}$ for $i=1,2,3$ and $\widehat{\Psi}_0 := 1$.  
Then for a general isogeny, define
\[
 \widehat{\Psi}_\phi := \widehat{\Psi}_{\iota(\phi)}.
\]
To be explicit, for each $i=0, 1, 2, 3$, we equivalently define an elliptic function $\widehat{\Psi}_i$ on $E$ with the principal divisor
\[
  \div(\widehat{\Psi}_i) = \widehat{D}_i := 2(P_i) - 2(\mathcal{O}), 
\]
normalized so that
\begin{equation*}
  \frac{ t^{2\deg g_i} \widehat{\Psi}_i}{(t_i \circ g_i)^2} (\mathcal{O}) 
	=  
  \left( \frac{dt}{\omega} (\mathcal{O}) \right)^{2 \deg g_i}
	\left( \frac{dt_i}{\omega_i} (\mathcal{O}_i) \right)^{-2}.
\end{equation*}
As usual, this is independent of $t, t_i$ but depends on the invariant differentials.

\subsection{Formul\ae}
With the definitions as in the last section, we will prove the following formul\ae:
\begin{enumerate}
  \item First chain rule for unbiased $\alpha : E \rightarrow E'$, $\beta : E'' \rightarrow E$ (Theorem~\ref{thm:chain}):
	\[
			\Psi_{\alpha\beta}
			=
		\left(\Psi_\alpha \circ \beta\right) \Psi_\beta^{\deg \alpha}.
	\]
\item First chain rule for possibly biased $\alpha$, $\beta$ (Theorem ~\ref{thm:chain}):
  \[
    \left(
		\frac{
			\Psi_{\alpha\beta}
		}{
		\left(\Psi_\alpha \circ \beta\right) \Psi_\beta^{\deg \alpha}
	}
  \right)^2
=
{\frac{
				\widehat\Psi_{\alpha\beta}
			}{
				(\widehat\Psi_\alpha \circ \beta)
				\widehat\Psi_\beta^{\deg \alpha}
    }}.
	\]
  See also the Second Chain Rule (Theorem~\ref{thm:secondchain}).
  \item First relation to $x$ (Theorem~\ref{thm:relationtox}): For non-zero $\alpha, \beta: E \rightarrow E'$,
    \[
		 \frac{\Psi_{\alpha + \beta} \Psi_{\alpha - \beta} \widehat{\Psi}_\alpha \widehat{\Psi}_\beta}{ \Psi_\alpha^2 \Psi_\beta^2  \widehat{\Psi}_{\alpha + \beta} } = x' \circ \beta - x' \circ \alpha.
   \]
  \item First recurrence relation (Corollary \ref{cor:rec}): For $\alpha, \beta, \gamma : E \rightarrow E'$ for which the following are defined,
    \[
		 \frac{\Psi_{\alpha + \beta} \Psi_{\alpha - \beta} \widehat{\Psi}_\alpha \widehat{\Psi}_\beta}{ \Psi_\alpha^2 \Psi_\beta^2  \widehat{\Psi}_{\alpha + \beta} } 
		 +\frac{\Psi_{\beta + \gamma} \Psi_{\beta - \gamma} \widehat{\Psi}_\beta \widehat{\Psi}_\gamma}{ \Psi_\beta^2 \Psi_\gamma^2  \widehat{\Psi}_{\beta + \gamma} } 
		 +\frac{\Psi_{\gamma + \alpha} \Psi_{\gamma - \alpha} \widehat{\Psi}_\gamma \widehat{\Psi}_\alpha}{ \Psi_\gamma^2 \Psi_\alpha^2  \widehat{\Psi}_{\gamma + \alpha} } 
		 =0.
   \]
  \item Second relation to $x$ (Theorem \ref{thm:relationtox2}): For $\alpha, \beta, \sigma: E \rightarrow E'$ for which the following are defined,
    \[
      \frac{
      \Psi_{\alpha + \beta + \sigma} \Psi_{\alpha - \beta} \Psi_{\sigma}
	 }{
		 {\Psi}_{\alpha+\sigma} {\Psi}_{\beta+\sigma} \Psi_\alpha \Psi_\beta 
	 } 
	 \sqrt{\frac{
		 \widehat{\Psi}_{\alpha+\sigma} \widehat{\Psi}_{\beta+\sigma} \widehat\Psi_\alpha \widehat\Psi_\beta 
	 }{
     \widehat\Psi_{\alpha + \beta + \sigma} \widehat\Psi_{\alpha - \beta}\widehat{\Psi}_\sigma
	 }}
   = \frac{
     y' \circ \beta - y' \circ \sigma
   }{
     x' \circ \beta - x' \circ \sigma
   }
 -
\frac{
     y' \circ \alpha - y' \circ \sigma
   }{
     x' \circ \alpha - x' \circ \sigma
   }
   .
 \]
  \item Second recurrence relation (Corollary \ref{cor:rec2}): For $\alpha, \beta, \gamma, \sigma: E \rightarrow E'$ for which the following are defined,
    \begin{equation*}
      \begin{split}
  \frac{
		 \Psi_{\alpha + \beta + \sigma} \Psi_{\alpha - \beta}		
	 }{
		 {\Psi}_{\alpha+\sigma} {\Psi}_{\beta+\sigma} \Psi_\alpha \Psi_\beta 
	 } 
	 \sqrt{\frac{
		 \widehat{\Psi}_{\alpha+\sigma} \widehat{\Psi}_{\beta+\sigma} \widehat\Psi_\alpha \widehat\Psi_\beta 
	 }{
		 \widehat\Psi_{\alpha + \beta + \sigma} \widehat\Psi_{\alpha - \beta}		
	 }}
   &+
	\frac{
		 \Psi_{\beta + \gamma + \sigma} \Psi_{\beta - \gamma}	
	 }{
		 {\Psi}_{\beta+\sigma} {\Psi}_{\gamma+\sigma} \Psi_\beta \Psi_\gamma 
	 } 
	 \sqrt{\frac{
		 \widehat{\Psi}_{\beta+\sigma} \widehat{\Psi}_{\gamma+\sigma} \widehat\Psi_\beta \widehat\Psi_\gamma 
	 }{
		 \widehat\Psi_{\beta + \gamma + \sigma} \widehat\Psi_{\beta - \gamma}	
	 }} \\
   &+
	\frac{
		\Psi_{\gamma + \alpha + \sigma} \Psi_{\gamma - \alpha}
	 }{
		 {\Psi}_{\gamma+\sigma} {\Psi}_{\alpha+\sigma} \Psi_\gamma \Psi_\alpha 
	 }
	 \sqrt{\frac{
		 \widehat{\Psi}_{\gamma+\sigma} \widehat{\Psi}_{\alpha+\sigma} \widehat\Psi_\gamma \widehat\Psi_\alpha 
	 }{
		 \widehat\Psi_{\gamma + \alpha + \sigma} \widehat\Psi_{\gamma - \alpha}	
	 }}
=0.
\end{split}
\end{equation*}
  \end{enumerate}
  The role and well-definition of the extra square-root factors are explained by Lemmas~\ref{lemma:squareroot} and \ref{lemma:quadrel}.

\section{Kernel functions and the chain rule}
\label{sec:kernel}

The Mazur-Tate method for normalizing division polynomials can be applied generally to a class of functions which we will call \emph{kernel functions}, and gives rise to a system of functions which are well-normalized with respect to one another.  Using this language, we obtain the chain rule and its generalization.

Given an isogeny $\phi: E \rightarrow E'$, define the divisor
\[
  K_\phi := \phi^*(\mathcal{O}') \in \Div(E).
\]
Conventionally, $K_{[0]} = \emptyset$ for any zero isogeny $[0]$.
Distinct from the divisor $K_\phi$, we will write $(K_\phi)$ as a formal \emph{kernel symbol} associated to $\phi$.
We can define the group of formal $\ZZ$-sums of kernel symbols $(K_\phi)$,
\[
  \Ker(E) := \left\{ \sum_\phi n_\phi (K_\phi) : \substack{\phi \text{ ranges over non-zero isogenies with source $E$} \\ \text{and all but finitely many $n_\phi \in \ZZ$ are zero.} } \right\}.
\]
Its elements will be called \emph{kernel symbol sums}.  Conventionally, for any zero isogeny $[0]$, the symbol $(K_{[0]})$ should be interpreted as the zero element of $\Ker(E)$ (the empty sum).  There is a map from $\Ker(E)$ to $\Div(E)$ by substituting the kernel divisor $K_\phi$ for each symbol $(K_\phi)$, namely 
\[
  \sum_\phi n_\phi (K_\phi) \mapsto \sum_\phi n_\phi K_\phi.
\]
Throughout the paper, a sum or product with a prime, such as $\sum'_\phi$, indicates that zero isogenies are dropped from the indexing set.

Given a kernel symbol sum whose image in $\Div(E)$ is principal, we can associate an elliptic function with that image, up to scalar normalization.  The choice of normalization is more delicate, depending on the kernel symbol sum and not just the image divisor.  
Fix a uniformizer $t_\phi$ at the identity and differential $\omega_\phi$ on the target curve for each $\phi$.  For now, for the greatest generality, we do this independently for each $\phi$, even if some target curves coincide.
To a principal kernel symbol sum $\sum_\phi n_\phi (K_\phi)$, the associated elliptic function $h$ can be normalized by requiring
\begin{equation}
\label{eqn:normalizeme}
	\frac{h}{
	\left(\prod'_\phi (t_\phi \circ \phi)^{n_\phi}\right)
}
(\mathcal{O}) = \prod_\phi{\vphantom{\sum}}' \left( \frac{dt_\phi}{\omega_\phi}\left(\mathcal{O}_\phi\right)\right)^{-n_\phi},
\end{equation}
where by design both sides have a finite non-zero value.  This is independent of the choice of uniformizers but depends on the choice of invariant differentials. 
Any elliptic function $h$ of this form and normalized in this way is called the \emph{kernel function} for the associated kernel symbol sum.  

This normalization is consistent in the sense that the product of two kernel functions is again a kernel function (for the sum of the kernel symbol sums).  

It has the following convenient property.
\begin{lemma}
  \label{lemma:independent}
	Fix an elliptic curve $E$.  
  For each other elliptic curve $E'$, suppose $\omega' := \omega_\phi$ are chosen to agree for all $\phi \in \Hom(E,E')$.  
  Consider 
  a kernel symbol sum $\sum_{\phi \in I} n_\phi (K_\phi)$ indexed over some collection $I$ of isogenies out of $E$.
  Then the derived kernel function is independent of the choice of invariant differential $\omega'$ on $E'$ whenever $\sum_{\phi\in I \cap \Hom(E,E')} n_\phi = 0$.  
\end{lemma}

Returning to $\Psi_\phi$, for non-zero $\phi$, observe that we have defined it in \eqref{eqn:Dphi} by kernel symbol sum
\begin{equation}
  \label{eqn:DphiK}
	(K_\phi) + (K_{g_\phi}) - (\deg \phi + \deg g_\phi) (K_1)
\end{equation}
(giving rise to divisor $D_\phi$), and the associated normalization; it is therefore a kernel function with respect to the given kernel symbol sum.
The auxiliary functions $\widehat{\Psi}_\phi$ (defined above) and $\widetilde{\Psi}_\phi$ (discussed in Remark~\ref{rem:tilde}) are also kernel functions, with divisors
\[
  \div(\widehat{\Psi}_\phi) = 2(P_\phi) - 2(\mathcal{O}) = 2(K_{g_\phi} - (\deg g_\phi) K_1), \quad
  \div(\widetilde{\Psi}_\phi) = 2 \phi^*(\mathcal{O}') {- 2 (\deg\phi) (\mathcal{O})} = 2 ( K_\phi - (\deg\phi) K_1).
\]
and normalizations controlled by the kernel symbol sums 
\[
  2(K_{g_\phi}) - 2(\deg g_\phi) (K_1), \quad \text{and} \quad
  2 ( K_\phi) - 2(\deg \phi) (K_1),
\]
respectively.

Because of the importance of the isogenies of degree two, we will make a further convention on the choice of invariant differentials, i.e., we choose $\omega_1$, $\omega_2$, $\omega_3$ and $\omega$ so that the kernel function associated to the kernel symbol sum
	\begin{equation}
		\label{eqn:kersum2}
		(K_{g_1}) + (K_{g_2}) + (K_{g_3}) - (K_{[2]}) - 2(K_1)
	\end{equation}
  is $1$.  Namely,
  \begin{equation}
    \label{eqn:convention}
    \frac{(t \circ [2])t^2}{(t_1 \circ g_1)(t_2 \circ g_2)(t_3 \circ g_3)}\left(\mathcal{O}\right) = 
    \left( \frac{dt_1}{\omega_1} \left( \mathcal{O} \right) \right)^{-1}
    \left( \frac{dt_2}{\omega_2} \left( \mathcal{O} \right) \right)^{-1}
    \left( \frac{dt_3}{\omega_3} \left( \mathcal{O} \right) \right)^{-1}
    \left( \frac{dt}{\omega} \left( \mathcal{O} \right) \right)^{3}.
  \end{equation}
  In other words, we accomplish this by scaling the $\omega_i$, $\omega$ relative to one another.
  This convention allows us to consider kernel symbol sums equivalent modulo the expression \eqref{eqn:kersum2}
	when normalizing kernel functions.  

  A word of caution:  $(K_{\phi})$ and $(K_{-\phi})$ are examples of distinct kernel symbols whose divisors $K_\phi$ and $K_{-\phi}$ are equal, but which may induce a different normalization when included in a kernel symbol sum used to generate a kernel function.  In particular, $t \circ (-\phi) = - t\circ \phi$, altering the normalization \eqref{eqn:normalizeme} by a sign.  The same applies to any post-composition by an automorphism.

  It would be interesting to describe the kernel of $\Ker(E) \rightarrow \Div(E)$ more generally; for related literature, see \cite{RaoReid}.

There is a natural notion of pullback on kernel symbol sums, namely, when $\beta: E'' \rightarrow E$,
\[
  \beta^* (K_\gamma) := (K_{\gamma\beta}),
\]
extended linearly.  This commutes with pullback on divisors.

\begin{lemma}
  \label{lemma:pullback}
Let $E$ be an elliptic curve. 
  For each other elliptic curve $E'$, suppose $\omega' := \omega_\phi$ are chosen to agree for all $\phi \in \Hom(E,E')$.  
  Suppose $f$ and $g$ are kernel functions associated to kernel symbol sums $\sum_{\gamma \in \Hom(E,E')} n_\gamma (K_\gamma)$ and $\sum_{\delta \in \Hom(E'',E')} m_\delta (K_\delta)$, respectively.  Suppose $\beta : E'' \rightarrow E$.
  Suppose that $\sum_\delta m_\delta (K_\delta) = \beta^* \sum_\gamma n_\gamma (K_\gamma)$.
  Then $f \circ \beta = g$.  
\end{lemma}

  That is, the pullback of a kernel function is a kernel function for the pullback of its kernel symbol sum.

\begin{proof}
  The assumptions imply $\beta^* \div f = \div g$.
 	Now we consider the normalization.  Since $f$ and $g$ are kernel functions,
  \[
    \frac{g}{ \prod_\gamma (t' \circ \gamma \beta)^{n_\gamma}}(\mathcal{O}'')
    = \left(
      \frac{ dt'}{\omega'}(\mathcal{O}')
    \right)^{-\sum_\gamma n_\gamma},
  \]
  and
\[
  \frac{f \circ \beta}{ \prod_\gamma (t' \circ \gamma \circ \beta)^{n_\gamma}}(\mathcal{O}'')
    = \frac{f}{ \prod_\gamma (t' \circ \gamma)^{n_\gamma}}(\mathcal{O})
    = \left(
      \frac{ dt'}{\omega'}(\mathcal{O}')
    \right)^{-\sum_\gamma n_\gamma}.
  \]
Therefore, the normalizations agree.
\end{proof}

\begin{theorem}[First chain rule]
  \label{thm:chain}
	If non-zero $\alpha: E' \rightarrow E''$ and $\beta: E \rightarrow E'$ are unbiased, then
	\[
			\Psi_{\alpha\beta}
			=
		\left(\Psi_\alpha \circ \beta\right) \Psi_\beta^{\deg \alpha}.
	\]
	In general, we have
	\[
    \left(
		\frac{
			\Psi_{\alpha\beta}
		}{
		\left(\Psi_\alpha \circ \beta\right) \Psi_\beta^{\deg \alpha}
	}
  \right)^2
=
{\frac{
				\widehat\Psi_{\alpha\beta}
			}{
				(\widehat\Psi_\alpha \circ \beta)
				\widehat\Psi_\beta^{\deg \alpha}
    }}.
	\]
\end{theorem}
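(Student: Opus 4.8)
The plan is to argue entirely at the level of kernel symbol sums, exploiting two formal facts established earlier. First, the assignment sending a principal kernel symbol sum $s$ to its kernel function $h_s$ is a homomorphism: since divisors add and the normalizing values multiply, one has $h_{s+s'} = h_s h_{s'}$ and $h_{ns} = h_s^{\,n}$, and by Lemma~\ref{lemma:pullback} one has $h_{\beta^* s} = h_s \circ \beta$. Second, a kernel function is determined by its kernel symbol sum, because a common symbol sum forces both the same divisor and the same normalizing constant. Thus it suffices to show that each side of the claimed identity is a kernel function for one and the same symbol sum. Recall that $\Psi_\phi = h_{D_\phi}$ with $D_\phi = (K_\phi) + (K_{g_\phi}) - (\deg\phi + \deg g_\phi)(K_1)$, and that $\widehat\Psi_\phi = h_{\widehat D_\phi}$ with $\widehat D_\phi = 2(K_{g_\phi}) - 2\deg g_\phi\,(K_1)$.

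First I would compute the symbol sum governing the left-hand ratio. Using $\beta^*(K_\gamma) = (K_{\gamma\beta})$, the function $(\Psi_\alpha\circ\beta)\,\Psi_\beta^{\deg\alpha}$ is the kernel function of $\beta^* D_\alpha + \deg\alpha\,D_\beta$, so the ratio $\Psi_{\alpha\beta}/\big((\Psi_\alpha\circ\beta)\Psi_\beta^{\deg\alpha}\big)$ is the genuine function $h_\Delta$ for
\[
\Delta := D_{\alpha\beta} - \beta^* D_\alpha - \deg\alpha\,D_\beta .
\]
A direct bookkeeping, in which the $(K_{\alpha\beta})$ symbols cancel and the $(K_\beta)$ coefficients collapse to $\deg g_\alpha$, gives
\[
\Delta = (K_{g_{\alpha\beta}}) - (K_{g_\alpha\beta}) + \deg g_\alpha\,(K_\beta) - \deg\alpha\,(K_{g_\beta}) + (\deg\alpha\deg g_\beta - \deg g_{\alpha\beta})(K_1).
\]
Consequently the squared left-hand side is $h_{2\Delta}$.

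Next I would run the identical computation for the auxiliary functions. Because $\widehat D_\phi$ records exactly the $g_\phi$-part of $D_\phi$, doubled, the right-hand ratio $\widehat\Psi_{\alpha\beta}/\big((\widehat\Psi_\alpha\circ\beta)\widehat\Psi_\beta^{\deg\alpha}\big)$ is the kernel function of $\widehat D_{\alpha\beta} - \beta^*\widehat D_\alpha - \deg\alpha\,\widehat D_\beta$, and expanding this expression yields precisely $2\Delta$. Hence both sides of the asserted identity are kernel functions for the single symbol sum $2\Delta$, and by uniqueness they coincide. This settles the biased statement, and I would emphasize that no relation whatsoever among $g_{\alpha\beta}$, $g_\alpha$, $g_\beta$ is needed: the role of the $\widehat\Psi$ factors is exactly to absorb this unknown two-torsion discrepancy, so the identity holds for whatever those degree-two isogenies happen to be.

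It remains to treat the unbiased case, where the genuine input lies. When $\alpha$ and $\beta$ are unbiased one has $g_\alpha = g_\beta = g_0 = \mathrm{id}$, so $g_\alpha\beta = \beta$ and $\deg g_\alpha = \deg g_\beta = 1$; substituting these into the expression for $\Delta$ collapses every term to zero \emph{provided} $g_{\alpha\beta} = \mathrm{id}$ as well, that is, provided $\alpha\beta$ is itself unbiased. The hard part will therefore be establishing that the composite of two unbiased isogenies is unbiased. I would prove this from the description $P_\phi = \sum_{P \in E[\phi]\cap E[2]} P$ together with the behaviour of kernel sums under composition: realizing $\ker(\alpha\beta)$ as an extension of $\ker\alpha$ by $\ker\beta$ and summing coset by coset expresses $P_{\alpha\beta}$ through $P_\alpha$ and $P_\beta$ (the kernel-sum/cube-law compatibility underlying Lemma~\ref{lemma:squareroot}), so that both vanishing forces $P_{\alpha\beta} = \mathcal{O}$. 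Granting this, $\Delta = 0$ as a symbol sum, whence $h_\Delta = 1$ and $\Psi_{\alpha\beta} = (\Psi_\alpha\circ\beta)\,\Psi_\beta^{\deg\alpha}$ on the nose.
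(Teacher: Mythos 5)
Your proposal is correct and takes essentially the same approach as the paper: both proofs realize each side as a kernel function, compute and compare kernel symbol sums (your $2\Delta$ is precisely the symbol sum appearing in the paper's proof), and conclude from Lemma~\ref{lemma:pullback} together with the multiplicativity and uniqueness of the normalization that functions with the same symbol sum coincide. The one point where you are more explicit than the paper---which disposes of the unbiased identity with ``calculated similarly (but more simply)''---is the implicitly needed fact that a composite of unbiased isogenies is unbiased; your coset-summation sketch of this is sound (and it also follows in one line from pullback preserving linear equivalence: $(\alpha\beta)^*(\mathcal{O}')=\beta^*\alpha^*(\mathcal{O}')\sim\deg\alpha\,\beta^*(\mathcal{O})\sim\deg\alpha\deg\beta\,(\mathcal{O}'')$), so there is no gap.
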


\begin{proof}
  We begin with the second equation.  Using \eqref{eqn:DphiK}, the left side is a kernel function with kernel symbol sum
  \begin{align*}
    &2\large(
    (K_{\alpha\beta}) + (K_{g_{\alpha\beta}}) - \beta^* (K_\alpha) - \beta^* (K_{g_\alpha}) - \deg \alpha (K_\beta) - \deg \alpha (K_{g_\beta}) \\
    &- (\deg \alpha\beta + \deg g_{\alpha\beta}) (K_1)
    + \beta^* (\deg \alpha + \deg g_{\alpha}) (K_1) + (\deg \alpha \deg \beta + \deg \alpha \deg g_\beta )(K_1) \large)\\
    &= 
    2\large( (K_{g_{\alpha\beta}}) - \beta^* (K_{g_\alpha}) - \deg \alpha (K_{g_\beta}) + \beta^* \deg g_{\alpha} (K_1) - (\deg g_{\alpha\beta} - \deg \alpha \deg g_\beta )(K_1) \large) \\
    &+ 2(K_{\alpha\beta})  - 2\deg \alpha(K_\beta) - 2\beta^*(K_\alpha)- 2\beta^*\deg \alpha (K_1).
  \end{align*}
  We have used that $\deg(\alpha \beta) = \deg \alpha \deg \beta$.  
  Using the definition of pullback on kernel symbol sums, this simplifies to
  \begin{equation*}
    2\large( (K_{g_{\alpha\beta}}) - \beta^* (K_{g_\alpha}) - \deg \alpha (K_{g_\beta}) + \beta^* \deg g_{\alpha} (K_1) - (\deg g_{\alpha\beta} - \deg \alpha \deg g_\beta )(K_1) \large).
  \end{equation*}
  The right side is a kernel function whose kernel symbol sum is this same quantity.
  Therefore the two sides have the same divisor and normalization.
  The first equation is calculated similarly (but more simply).
\end{proof}

\section{Functions supported on two-torsion}
\label{sec:two-tor}

The divisor of $\Psi_\phi$ differs from the `ideal' but non-principal divisor $\phi^*(\mathcal{O}') - (\deg \phi) (\mathcal{O})$ by half of $\div(\widehat{\Psi}_\phi)$, which is again non-principal.  In this section we prove some generalities on when the `extra two torsion' summands combine to give a principal divisor.

  When we write $\sum_\phi$ we will take this as an abbreviation for $\sum_{\phi \in \Hom(E,E')}$ for a fixed $E$ and $E'$.  We also define a \emph{finite integral quadratic identity} to be any identity of the form $\sum_{i\in I} a_i q(i) = 0$, having finitely many terms, indexed over a $\ZZ$-module $I$ with coefficients $a_i \in \ZZ$, which is a combination of finitely many instances of the cube identity:
	\[
		q(\alpha + \beta  + \gamma) - q(\alpha + \beta)
		- q(\beta + \gamma)
		- q(\gamma + \alpha)
		+ q(\alpha) + q(\beta) + q(\gamma) - q(0)
		=0,
	\]
One example is the better-known parallelogram rule,
\[
  q(\alpha + \beta) + q(\alpha - \beta) - 2q(\alpha) - 2q(\beta) + 2q(0) = 0.
\]
Such identities are true for any function which is a linear combination of a quadratic form and a constant.

Recall that a prime on sums and products indicates that zero isogenies are omitted from the indexing set.

\begin{lemma}
	\label{lemma:squareroot}
  Fix $E$ and $E'$.
	Suppose that
	\[
		\sum_\phi e_\phi q(\phi) = 0
		\]
		is a finite integral quadratic identity on $\phi \in \Hom(E,E')$.  Then there is a kernel function $f$ given by 
    \[
      \sum_\phi{\vphantom{\sum}}' e_\phi (K_{g_\phi}) 
    \]
    which has as its square
		\[
			 \prod_\phi{\vphantom{\sum}}' \widehat{\Psi}_\phi^{e_\phi}.
		\]
\end{lemma}

		We will denote $f$ by
		\[
		 \sqrt{ \prod_\phi{\vphantom{\sum}}' \widehat{\Psi}_\phi^{e_\phi}}.
		\]

\begin{proof}
	Observe that the function 
		$\prod'_\phi \widehat{\Psi}_\phi^{e_\phi}$ is the kernel function of the kernel symbol sum
	\[
		2 \left(\sum_\phi{\vphantom{\sum}}' e_\phi (K_{g_\phi}) \right) - 2\left(\sum_\phi{\vphantom{\sum}}' e_\phi (\deg g_\phi) \right) (K_1).
	\]
  It will suffice to show that $\sum'_\phi e_\phi K_{g_\phi}$ is a principal divisor.  
  This ensures that $f$ exists, and since the only multiple of $K_1 = (\mathcal{O})$ which is principal is the trivial multiple, it also shows that
		$\prod'_\phi \widehat{\Psi}_\phi^{e_\phi}$ is the kernel function of the kernel symbol sum
	\[
		2 \left(\sum_\phi{\vphantom{\sum}}' e_\phi (K_{g_\phi}) \right),
	\]
  which implies the statement about the square of $f$.

  Therefore we show that $\sum'_\phi e_\phi K_{g_\phi}$ is a principal divisor.
We may assume without loss of generality that we are in the case of the cube identity
	\[
    \sum_\phi e_\phi q(\phi) =
		q(\alpha + \beta  + \gamma) - q(\alpha + \beta)
		- q(\beta + \gamma)
		- q(\gamma + \alpha)
		+ q(\alpha) + q(\beta) + q(\gamma) - q(0)
		=0,
  \]
  where $\alpha, \beta, \gamma \in \Hom(E,E')$.  The degree of the corresponding divisor is zero, and it is supported on two-torsion points.
  Write $\sum'_\phi e_\phi K_{g_\phi} =: \sum_{i=0}^3 a_i (P_i)$.  
  We will compute the parity of the $a_i$, $i=1,2,3$; it will suffice to observe that the $a_i$ all have the same parity.
 The parity of $a_i$ is the parity of the number of elements $\phi$ in the list
 \begin{equation}
   \label{eqn:list}
    0, \alpha, \beta, \gamma, \alpha+\beta, \beta + \gamma, \gamma + \alpha, \alpha + \beta + \gamma
  \end{equation}
  for which $P_\phi = P_i$.

  To determine $a_i$, we examine the action of each $\phi = \alpha, \beta, \gamma$ on the $2$-torsion, as a map $E[2] \rightarrow E'[2]$ on $\FF_2$-vector spaces.  Denote this by $\rho(\phi) \in \Hom_{\FF_2}(E[2],E'[2])$.  The map $\kappa_i: \Hom_{\FF_2}(E[2],E'[2]) \rightarrow E'[2]$ taking $F$ to $F(P_i)$ is a linear transformation of vector spaces over $\FF_2$ from dimension $4$ to dimension $2$.  We have $P_\phi = P_i$ if and only if $\rho(\phi) \in \ker \kappa_i \setminus \{ 0 \}$.

  Let $C$ be the subspace of $\Hom_{\FF_2}(E[2],E'[2])$ generated by $\rho(\alpha)$, $\rho(\beta)$ and $\rho(\gamma)$, namely
  \[
    C = \{ 0, \rho(\alpha), \rho(\beta), \rho(\gamma), \rho(\alpha + \beta), \rho(\beta + \gamma), \rho(\gamma + \alpha), \rho(\alpha + \gamma + \beta) \}.
  \]
If it is of dimension $3$ (so $C$ has 8 distinct elements), then we conclude from linearity that the fibres of $\kappa_i |_C$ are of even size, and so all the $a_i$ are odd.  

On the other hand, if $C$ is of lower dimension, then each element in the image under $\rho$ of the list \eqref{eqn:list} occurs with even multiplicity.  Therefore $P_\phi = P_i$ occurs an even number of times amongst $\phi$ in \eqref{eqn:list}.  

We have shown that the parity of $a_1, a_2, a_3$ agree.
\end{proof}

\begin{lemma}
	\label{lemma:quadrel}
	If
	\[
		\sum_\phi e_\phi q(\phi) = 0
		\]
		is a finite integral quadratic identity, then
		\[
		\prod_\phi{\vphantom{\sum}}' \Psi_\phi^{e_\phi} \sqrt{ \prod_\phi{\vphantom{\sum}}' \widehat{\Psi}_\phi^{-e_\phi}}
		\]
		is a well-defined kernel function, with divisor
		\[
			\sum_\phi{\vphantom{\sum}}' e_\phi K_\phi.
    \]
\end{lemma}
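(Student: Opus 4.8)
The plan is to treat the displayed expression as a product of kernel functions and to track its kernel symbol sum, so that the divisor falls out by substituting kernel divisors. First I would dispose of well-definedness. The square-root factor $\sqrt{\prod_\phi \widehat{\Psi}_\phi^{-e_\phi}}$ exists and is unique by Lemma~\ref{lemma:squareroot}, applied to the identity $\sum_\phi (-e_\phi) q(\phi) = 0$, which is again a finite integral quadratic identity (the class of such identities is closed under negation). Each $\Psi_\phi$ is a kernel function with symbol sum $D_\phi = (K_\phi) + (K_{g_\phi}) - (\deg \phi + \deg g_\phi)(K_1)$, as recorded just after Lemma~\ref{lemma:independent}, so $\prod_\phi \Psi_\phi^{e_\phi}$ is a kernel function for $\sum_\phi e_\phi D_\phi$. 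Since the product of two kernel functions is again a kernel function, the entire expression is a kernel function; this establishes the ``well-defined kernel function'' half of the claim.

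Next I would identify its kernel symbol sum. The square-root factor is supported on $E[2]$, so by Lemma~\ref{lemma:invariant} its normalization is canonical and it may be represented by the symbol sum $\sum_\phi (-e_\phi)\bigl((K_{g_\phi}) - \deg g_\phi (K_1)\bigr)$, namely half the symbol sum of its square. Adding this to $\sum_\phi e_\phi D_\phi$, the $(K_{g_\phi})$ terms cancel exactly, leaving $\sum_\phi e_\phi \bigl((K_\phi) - \deg \phi (K_1)\bigr)$.

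Finally I would read off the divisor. Substituting the kernel divisors $K_\phi = \phi^*(\mathcal{O}')$ and $K_1 = (\mathcal{O})$ yields $\sum_\phi e_\phi \bigl(\phi^*(\mathcal{O}') - \deg \phi (\mathcal{O})\bigr)$. Since $\deg$ is itself a quadratic form on $\Hom(E,E')$, the hypothesis forces $\sum_\phi e_\phi \deg \phi = 0$, so the $(\mathcal{O})$ contributions vanish and the divisor collapses to $\sum_\phi e_\phi \phi^*(\mathcal{O}') = \sum_\phi e_\phi (\phi P = \mathcal{O})$, as claimed.

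The only genuine obstacle is the bookkeeping around the two-torsion factor: one must be sure that the $(K_{g_\phi})$-contributions arising from the $(P_\phi) - (\mathcal{O})$ part of each $D_\phi$ are cancelled precisely by the square-root factor, both as divisors and as normalizations. The divisor cancellation is immediate from $\div(\widehat{\Psi}_\phi) = 2(P_\phi) - 2(\mathcal{O})$, but the normalization cancellation is exactly what Lemmas~\ref{lemma:squareroot} and \ref{lemma:invariant} are designed to supply, since together they guarantee that the square-root factor is a canonically normalized kernel function whose symbol sum is the required ``half.'' Everything else is routine symbol-sum arithmetic.
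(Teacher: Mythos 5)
Your proposal is correct and follows essentially the same route as the paper's own (very terse) proof: well-definedness is delegated to Lemma~\ref{lemma:squareroot}, and the divisor is obtained by exactly the kernel-symbol-sum calculation the paper compresses into one line. If anything, your bookkeeping is more precise than the paper's shorthand, which records the divisor of the square-root factor as $\sum_\phi e_\phi K_{g_\phi}$ rather than your degree-zero version $\sum_\phi e_\phi \left( (K_{g_\phi}) - \deg g_\phi \,(K_1) \right)$; the two differ by a multiple of $(\mathcal{O})$ in general, and your form is the one that makes the final cancellation, via $\sum_\phi e_\phi \deg \phi = 0$ for the quadratic function $\deg$, come out cleanly.
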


\begin{proof}
  That it is well-defined follows from Lemma~\ref{lemma:squareroot}; that it has the given divisor is a calculation using the fact that $\sqrt{ \prod'_\phi \widehat{\Psi}_\phi^{e_\phi}}$ has divisor $\sum'_\phi e_\phi K_{g_\phi}$ (from the proof of Lemma~\ref{lemma:squareroot}).
\end{proof}

The First Chain Rule (Theorem ~\ref{thm:chain}) in the biased case is somewhat unsatisfying; however, in certain cases the relationship simplifies. The next result follows immediately from Lemma~\ref{lemma:quadrel}, Lemma~\ref{lemma:pullback} and Lemma~\ref{lemma:independent}.

\begin{theorem}[Second chain rule]
	\label{thm:secondchain}
	Suppose 
 $\sum_{\alpha \in \Hom(E'',E')} e_{\alpha} q(\alpha) = 0$ is a finite integral quadratic identity.
	Let $\beta: E'' \rightarrow E$ be non-zero, such that $e_\alpha = 0$ for $\alpha$ not factoring through $\beta$.
Then, letting $\gamma$ range through $\Hom(E,E')$,
	\[
		\left(\prod_{\gamma}{\vphantom{\sum}}' \Psi^{e_{\gamma\beta}}_{\gamma}
		\sqrt{\prod_{\gamma}{\vphantom{\sum}}' \widehat{\Psi}^{e_{\gamma\beta}}_{\gamma}}\right) \circ \beta
		=
		\prod_{\gamma}{\vphantom{\sum}}' \Psi^{e_{\gamma\beta}}_{\gamma\beta}
		\sqrt{\prod_{\gamma}{\vphantom{\sum}}' \widehat{\Psi}^{e_{\gamma\beta}}_{\gamma\beta}}.
	\]
	The quantity is independent of the choice of invariant differential on $E$ and $E''$.
  If, in addition, $\sum'_{\gamma} e_{\gamma\beta} = 0$, then the quantity in question is independent of the choice of invariant differential on $E'$.
\end{theorem}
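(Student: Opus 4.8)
The plan is to recognize the two sides as kernel functions in the sense of Section~\ref{sec:kernel} and to conclude by pullback, exactly as the preceding remark advertises. Write $F$ for the parenthesized factor on the left (a function on $E$) and $G$ for the right-hand side (a function on $E''$), so that the assertion is $F \circ \beta = G$. First I would exploit the hypothesis: since $\beta \neq 0$, precomposition $\gamma \mapsto \gamma\beta$ is an injective homomorphism of $\ZZ$-modules $\Hom(E,E') \hookrightarrow \Hom(E'',E')$, and by assumption $e_\alpha = 0$ unless $\alpha$ lies in its image. Hence the given finite integral quadratic identity $\sum_\alpha e_\alpha q(\alpha) = 0$ transports along this embedding to a finite integral quadratic identity $\sum_\gamma e_{\gamma\beta}\, q(\gamma) = 0$ on $\Hom(E,E')$. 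Lemma~\ref{lemma:quadrel} then applies on both curves: applied to the identity on $\Hom(E,E')$ it shows $F$ is a well-defined kernel function on $E$, and applied to the original identity on $\Hom(E'',E')$ (whose nonzero terms are indexed by $\alpha = \gamma\beta$) it shows $G$ is a well-defined kernel function on $E''$.

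Next I would identify the kernel symbol sums and pull back. By Lemma~\ref{lemma:quadrel}, $F$ has divisor $\sum_\gamma e_{\gamma\beta} K_\gamma$; concretely, the two-torsion symbols $(K_{g_\gamma})$ carried by the factors $\Psi_\gamma$ in $\prod_\gamma \Psi_\gamma^{e_{\gamma\beta}}$ cancel, grouped by $i = \iota(\gamma)$, against the divisor of the square-root factor computed in the proof of Lemma~\ref{lemma:squareroot}, while the residual $(K_1)$ coefficient is $-\sum_\gamma e_{\gamma\beta}\deg\gamma = 0$ because $\deg$ is a quadratic form and therefore satisfies every finite integral quadratic identity. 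Thus $F$ is, modulo the convention \eqref{eqn:kersum2}, a kernel function for the symbol sum $\sum_\gamma e_{\gamma\beta}(K_\gamma)$ with only the genuine symbols $(K_\gamma)$ surviving, and likewise $G$ is one for $\sum_\gamma e_{\gamma\beta}(K_{\gamma\beta})$. Since $\beta^*(K_\gamma) = (K_{\gamma\beta})$ by the definition of pullback on kernel symbol sums, these two sums correspond under $\beta^*$, and Lemma~\ref{lemma:pullback} delivers $F \circ \beta = G$ with divisor and normalization matching simultaneously. I expect this cancellation to be the main obstacle to present cleanly: the pullback $\beta^*$ naturally produces the \emph{composite} degree-two isogeny $g_\gamma\beta$ out of $E''$, which is not the fixed isogeny $g_{\gamma\beta}$ used to define $\Psi_{\gamma\beta}$ (it has degree $2\deg\beta$ rather than $2$), so one must argue that these purely two-torsion-supported discrepancies are absorbed by the square-root factors and reconciled by Lemma~\ref{lemma:invariant}, rather than matching the symbol sums term by term.

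Finally, for the independence statements I would appeal to Lemma~\ref{lemma:independent}, which guarantees that a kernel function is independent of the invariant differential on a given target curve as soon as the total coefficient of the symbols targeting that curve vanishes. For the differential on the source $E$, the relevant symbol is $(K_1) = (K_{\mathrm{id}_E})$, whose total coefficient I have already computed to be $-\sum_\gamma e_{\gamma\beta}\deg\gamma = 0$; the identical computation, using $\deg(\gamma\beta) = \deg\beta\cdot\deg\gamma$, gives the vanishing $-\sum_\gamma e_{\gamma\beta}\deg(\gamma\beta) = 0$ needed for independence of the differential on $E''$. For the differential on $E'$ the relevant symbols are the $(K_\gamma)$ (respectively $(K_{\gamma\beta})$), whose total coefficient is $\sum_\gamma e_{\gamma\beta}$; this vanishes precisely under the additional hypothesis $\sum_\gamma e_{\gamma\beta} = 0$, yielding the final claim.
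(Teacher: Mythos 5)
Your proposal is correct and takes essentially the same route as the paper, whose entire proof is the single remark that the theorem ``follows immediately from Lemma~\ref{lemma:quadrel}, Lemma~\ref{lemma:pullback} and Lemma~\ref{lemma:independent}'': your reduction of both sides to kernel functions for the symbol sums $\sum_\gamma e_{\gamma\beta}(K_\gamma)$ and $\sum_\gamma e_{\gamma\beta}(K_{\gamma\beta})$, the application of the pullback lemma, and the coefficient counts for the differential-independence claims are exactly that argument made explicit. One caveat: the cancellation of two-torsion symbols you describe is valid for the pairing $\prod_\gamma \Psi_\gamma^{e_{\gamma\beta}} \sqrt{\prod_\gamma \widehat{\Psi}_\gamma^{-e_{\gamma\beta}}}$ of Lemma~\ref{lemma:quadrel}, i.e.\ with exponents $-e_{\gamma\beta}$ inside the square roots (with $+e_{\gamma\beta}$ as literally printed, the divisors of the two sides differ already for a parallelogram identity pulled back through $\beta = [2]$ with biased $\gamma$'s), so your proof in fact establishes the evidently intended reading, consistent with every other use of the square-root notation in the paper.
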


\section{Recurrences and Relations to $x$}

\label{sec:recmain}

\subsection{Normalization in terms of formal groups}
\label{sec:formal}

If we choose the normalizer $T := -x/y$ for the Weierstrass form $E: y^2 = f(x)$ over a field, and similarly $T'$ for $E'$ and $T_i$ for $E_i$, then we can specify the normalizations of $\Psi_\phi$ etc. in terms of the formal groups.  We have expansions for the normalized invariant differential, formal group law and $x$- and $y$-coordinates:
\begin{equation}
  \label{eqn:normalized-w}
	\omega = \frac{dT}{1 +O(T)}, 
	\quad
  F(T_1,T_2) = T_1 + T_2 + \cdots, \quad x(T) = T^{-2} + O(T^{-1}), \quad y(T) = -T^{-3} + O(T^{-2}).  
\end{equation}
We also have that there are non-zero constants $a_\phi$ and $a_i := a_{g_i}$ so that
	\begin{equation}
    \label{eqn:leading}
    \phi T = a_\phi T^{\deg_{in} \phi} + O(T^{2\deg_{in} \phi }), \quad
    g_i T = a_i T + O(T^{2 }),
	\end{equation}
  where $\deg_{in}$ represents the inseparable degree (which does not figure in the case for $g_i$ because we are away from characteristic two).
So the normalization \eqref{eqn:norm-arb}, taking the normalized differential as above, becomes
\begin{align}
	\label{eqn:norm}
  \Psi_\phi(T) &= T^{-\deg \phi-2} 
  (\phi T)
  (g_{\iota(\phi)} T) \notag \\
               &= a_{\iota(\phi)} a_\phi T^{-\deg \phi+\deg_{in}\phi-1} + \cdots .
\end{align}
Similarly,
\begin{align}
	\label{eqn:normhat}
  \widehat\Psi_\phi(T) &= T^{-2}
  (g_{\iota(\phi)} T)^2 \notag \\
                       &= a_{\iota(\phi)}^2 T^{-2} + \cdots .
\end{align}

Satoh \cite[Section 3]{Satoh} uses the same divisor to define $\Psi_\alpha$ for unbiased inseparable $\alpha \in \End(E)$, but specifies the normalization by requiring
\[
	\Psi_\alpha = (-1)^{N(\alpha)-1}\alpha T^{-N(\alpha)+1} + \cdots.
\]
Recall that for such $\alpha$, one has $a_\alpha = \alpha$, so that the normalizations of Mazur-Tate and Satoh differ by a sign of the form $(-1)^{\deg \phi - 1}$.  As briefly discussed in the introduction, including this sign does not affect \eqref{eqn:ellrec} or \eqref{eqn:ellrec-gen}, nor does it affect the relation to $x$ or the chain rule.  It is just a convention.

\subsection{Recurrences and Relations}
\label{sec:rec}

For everything that follows, we will now fix five invariant differentials:  $\omega$ on $E$, $\omega'$ on $E'$, $\omega_i$ on $E_i$ (regardless of whether some of these curves coincide).  We will simultaneously fix Weierstrass equations for $E, E', E_i$ for which these are the normalized invariant differentials as in Section~\ref{sec:formal}.  
These are used for normalization consistently as described for $\Psi$, $\widehat{\Psi}$, subject to the constraint \eqref{eqn:convention}.
Finally, we will use the expressions $x', y'$ for the coordinate functions associated to the Weierstrass equation for $E'$.  

The following relation generalizes the usual relation to $x$ in the unbiased case (described in the Introduction), where the $\widehat{\Psi}$-terms become trivial.  One should think of this formula as demonstrating the philosophy that, while for biased $\alpha$, the $\Psi_\alpha$ we want isn't naturally an elliptic function, we add something small to make it elliptic.  The $\widehat{\Psi}$-terms in the quotient below can remove this extra fudge factor in the aggregate quotient (where what remains \emph{is} elliptic), to obtain the quantity we are used to.

\begin{theorem}[Relation to $x$]
	\label{thm:relationtox}
	Let $\alpha$ and $\beta$ be non-zero isogenies from $E$ to $E'$ such that $\alpha \neq \pm \beta$.  Assume the characteristic is not two.
  Then
	\begin{equation}
		\label{eqn:diff}
		 \frac{\Psi_{\alpha + \beta} \Psi_{\alpha - \beta} \widehat{\Psi}_\alpha \widehat{\Psi}_\beta}{ \Psi_\alpha^2 \Psi_\beta^2  \widehat{\Psi}_{\alpha + \beta} } = x' \circ \beta - x' \circ \alpha.
	\end{equation}
\end{theorem}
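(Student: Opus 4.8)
The plan is to prove the identity through the kernel-function bookkeeping of Section~\ref{sec:kernel}: recognize both sides as elliptic functions on $E$, show they are kernel functions for the \emph{same} kernel symbol sum, and then conclude equality (not merely equality up to scalar) from the fact that a principal kernel symbol sum determines its kernel function uniquely, its divisor fixing the function up to scalar and the normalization fixing the scalar. Since each $\Psi_\phi$ and each $\widehat{\Psi}_\phi$ is a kernel function and products and ratios of kernel functions are again kernel functions, the left-hand side is automatically a kernel function for the $\ZZ$-combination of the underlying symbol sums; so the first task is to compute that combination.

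First I would substitute the symbol sums $\Psi_\phi \leftrightarrow (K_\phi) + (K_{g_\phi}) - (\deg\phi + \deg g_\phi)(K_1)$ and $\widehat{\Psi}_\phi \leftrightarrow 2(K_{g_\phi}) - 2\deg g_\phi\,(K_1)$ into $\Psi_{\alpha+\beta}\Psi_{\alpha-\beta}\widehat{\Psi}_\alpha\widehat{\Psi}_\beta\Psi_\alpha^{-2}\Psi_\beta^{-2}\widehat{\Psi}_{\alpha+\beta}^{-1}$ and collect terms. The crucial simplifications are: (i) $\alpha+\beta$ and $\alpha-\beta$ agree on $E[2]$, so $P_{\alpha+\beta} = P_{\alpha-\beta}$, hence $g_{\alpha+\beta} = g_{\alpha-\beta}$ and $\widehat{\Psi}_{\alpha+\beta} = \widehat{\Psi}_{\alpha-\beta}$, which makes every $(K_{g_\bullet})$ symbol cancel (this is exactly the role of the $\widehat{\Psi}$ factors: they absorb the stray two-torsion points $(P_\phi)$ carried by the $\Psi_\phi$); and (ii) the coefficient of $(K_1)$ vanishes because $\deg$ and $\deg g$ are quadratic, so $\deg(\alpha+\beta) + \deg(\alpha-\beta) = 2\deg\alpha + 2\deg\beta$ (the parallelogram rule). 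What survives is the clean symbol sum $(K_{\alpha+\beta}) + (K_{\alpha-\beta}) - 2(K_\alpha) - 2(K_\beta)$, whose image in $\Div(E)$ is $(\alpha+\beta)^*(\mathcal{O}') + (\alpha-\beta)^*(\mathcal{O}') - 2\alpha^*(\mathcal{O}') - 2\beta^*(\mathcal{O}')$. Lemma~\ref{lemma:invariant} together with convention \eqref{eqn:convention} guarantees the two-torsion part of this computation is independent of how it is presented.

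Next I would show that $x' \circ \alpha - x' \circ \beta$ is the kernel function attached to this same symbol sum. For the divisor: the zeros occur exactly where $\alpha P = \pm\beta P$, i.e.\ on $\ker(\alpha-\beta) \cup \ker(\alpha+\beta)$, and the poles on $\ker\alpha \cup \ker\beta$; a local analysis recovers the divisor above, the key point being that at a common zero $P$ with $\alpha P = \beta P \in E'[2]$ the function $x'$ is ramified, doubling the order of vanishing and matching the overlap of $(\alpha+\beta)^*(\mathcal{O}')$ and $(\alpha-\beta)^*(\mathcal{O}')$, while at shared poles (in particular at $\mathcal{O}$) the leading pole terms of $x'\circ\alpha$ and $x'\circ\beta$ partially cancel. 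For the normalization I would expand at $\mathcal{O}$ through the formal group \eqref{eqn:normalized-w}--\eqref{eqn:norm}, using $x' = (T')^{-2} + \cdots$ and $\phi T = a_\phi T^{\deg_{in}\phi} + \cdots$, together with the additivity of multipliers $a_{\alpha\pm\beta} = a_\alpha \pm a_\beta$ (the pullback $\phi \mapsto \phi^*\omega'$ is a homomorphism from $\Hom(E,E')$ to the invariant differentials on $E$), which yields $a_{\alpha+\beta}a_{\alpha-\beta} = a_\alpha^2 - a_\beta^2$. Comparing the leading coefficient of $x'\circ\alpha - x'\circ\beta$ against the value forced by the kernel-function normalization then pins down the scalar and finishes the identification.

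The main obstacle is the normalization, not the divisor. The divisor falls out almost formally from the symbol sum once one knows $g_{\alpha+\beta} = g_{\alpha-\beta}$ and the parallelogram rule for $\deg$. Fixing the constant is where the delicacy lies: the leading coefficient of $x'\circ\alpha - x'\circ\beta$ involves the \emph{difference} $a_\alpha^{-2} - a_\beta^{-2}$ rather than a single monomial, so the multiplier identity must be invoked with careful attention to signs in order to land on the stated orientation of the right-hand side, and the inseparable case $\deg_{in} > 1$ requires tracking which of $x'\circ\alpha$, $x'\circ\beta$ dominates the pole at $\mathcal{O}$. This is precisely the two-torsion and normalization wrangling the introduction warns about; the payoff of the kernel-function formalism is that it reduces all of it to a finite linear computation with kernel symbols, after which the normalization of the product is automatically the product of the normalizations. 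An alternate route, hinted at in the text, would replace $D_\phi$ by a nearby principal divisor such as $[2]^*(\phi^*(\mathcal{O}') - \deg\phi\,(\mathcal{O}))$ and descend, trading the $\widehat{\Psi}$-bookkeeping for a degree-two pullback.
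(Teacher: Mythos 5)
Your proposal is correct and takes essentially the same route as the paper's proof: the divisor of the left side is identified via the parallelogram (quadratic) identity together with the key observation $\widehat{\Psi}_{\alpha+\beta} = \widehat{\Psi}_{\alpha-\beta}$, and the scalar is then fixed by comparing leading coefficients of formal-group expansions at $\mathcal{O}$ using $a_{\alpha\pm\beta} = a_\alpha \pm a_\beta$, exactly as in the text. The only cosmetic difference is that you inline the kernel-symbol cancellation by hand, whereas the paper delegates it to Lemma~\ref{lemma:quadrel} (whose proof is precisely that computation); your closing remark that the signs in $a_\alpha^{-2} - a_\beta^{-2}$ are the real point of delicacy is well taken, since that is exactly where the leading-coefficient comparison must be done carefully.
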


\begin{proof}
  The parallelogram law is an integral quadratic identity:
	\[
		q(\alpha + \beta)  + q(\alpha - \beta) - 2q(\alpha) - 2q(\beta) + 2q(0) = 0.
	\]
  Thus from Lemma~\ref{lemma:quadrel} (observe that $\widehat{\Psi}_{\alpha + \beta} = \widehat{\Psi}_{\alpha - \beta}$), the left side has divisor
	\[
    K_{\alpha + \beta} + K_{\alpha - \beta} - 2 K_\alpha - 2 K_\beta.
	\]

  To see that this is also the divisor of the right hand side, let $\pi_1, \pi_2 : E' \times E' \rightarrow E'$ be the coordinate projections, and recall that the function $x \circ \pi_1 - x \circ \pi_2$ on $E' \times E'$ has divisor 
  \[
    \left( (\pi_1 + \pi_2)(P) = \mathcal{O}' \right)
+    \left( (\pi_1 - \pi_2)(P) = \mathcal{O}' \right)
    - 2\left( \pi_1(P) = \mathcal{O}' \right)
    - 2\left( \pi_2(P) = \mathcal{O}' \right).
  \]
Then, we pullback along $(\alpha, \beta) : E \rightarrow E' \times E'$ to obtain the divisor above.

	For the scaling on both sides of \eqref{eqn:diff}, we can compute the formal group expansions around $\mathcal{O}$.  
 For the left side, we have, using \eqref{eqn:norm} and \eqref{eqn:normhat}, that the leading coefficient is
 \[
    \frac{
      a_{\alpha-\beta}
      a_{\alpha+\beta}     }{
      a_\alpha^2 a_\beta^2 
    }.
 \]

  For the right side, first compute
  \begin{align*}
    (x' \circ \beta - x' \circ \alpha)(T) &= (a_\beta T^{\deg_{in}\beta})^{-2} - (a_\alpha T^{\deg_{in}\alpha})^{-2} + \cdots \\
    &= \frac{ (a_\alpha T^{\deg_{in}\alpha})^2-(a_\beta T^{\deg_{in}\beta})^2}{(a_\alpha T^{\deg_{in}\alpha})^2(a_\beta T^{\deg_{in}\beta})^2} + \cdots.
  \end{align*}
   Now, assume without loss of generality that $\deg_{in} \alpha \ge \deg_{in} \beta$.  
  Observe that 
\[
  a_{\alpha + \beta}T^{\deg_{in}(\alpha + \beta)} + \cdots = 
  \left( a_\alpha T^{\deg_{in}\alpha} + \cdots \right)
  +\left( a_\beta T^{\deg_{in}\beta} + \cdots \right).
\]
If $\deg_{in} \alpha = \deg_{in}\beta$, then comparing leading coefficients, $a_{\alpha \pm \beta} = a_\alpha \pm a_\beta$.  From this, we see that the leading coefficient on both sides of \eqref{eqn:diff} is $\frac{a_\alpha^2 - a_\beta^2}{a_\alpha^2 a_\beta^2} = a_\alpha^{-2} - a_\beta^{-2}$.

Otherwise, we have
\[
 a_{\alpha + \beta}T^{\deg_{in}(\alpha + \beta)} + \cdots = 
  a_\alpha T^{\deg_{in}\alpha} + \cdots ,
\]
from which $a_{\alpha + \beta} = a_\alpha$, and then both sides of \eqref{eqn:diff} have leading coefficient $a_\beta^{-2}$.
\end{proof}

The following alternate proof was suggested by David Grant, which shows that the biased recurrence relation pulls back under multiplication-by-2 to (and hence follows from) the unbiased case.  

\begin{proof}[Alternate proof]
Define
\[
  \Psi_\phi' := \frac{ \Psi_{2\phi} }{ \Psi_2^{\deg \phi} }
\]
having divisor
\[
    [2]^* \left( \phi^*(\mathcal{O}') - (\deg \phi) (\mathcal{O}) \right).
  \]
Define
\begin{equation}
  \label{eqn:2star}
  \Psis_\phi := \frac{ [2]^* \Psi_\phi }{ \Psi_\phi' },
\end{equation}
which from the Chain Rule (Theorem~\ref{thm:chain}) has the property that
\begin{equation}
  \label{eqn:2starB}
  \left( \Psis_\phi \right)^2 = [2]^*\widehat{\Psi}_\phi.
\end{equation}

  Now we can derive the result from the corresponding fact for unbiased isogenies (\cite[Appendix I, Proposition 1, Eqn. (4)]{MazurTate}), namely,
  \[
		 \frac{\Psi_{\alpha + \beta} \Psi_{\alpha - \beta} }{ \Psi_\alpha^2 \Psi_\beta^2 } = x' \circ \beta - x' \circ \alpha.
    \]
    To do this, 
    apply $[2]^*$ to \eqref{eqn:diff},
    and using \eqref{eqn:2star}, \eqref{eqn:2starB} and the observation that $\Psis_\alpha$ depends only on $\alpha$ modulo $2$, the statement is equivalent to
  \[
    \frac{\Psi'_{\alpha + \beta} \Psi'_{\alpha - \beta}
  }{ \left(\Psi'_\alpha \Psi'_\beta \right)^2 
  } 
		 = x' \circ 2\beta - x' \circ 2\alpha,
    \]
    which can be rewritten, using the fact that $\deg$ is quadratic, as 
    \[
      \frac{\Psi_{2\alpha + 2\beta} \Psi_{2\alpha - 2\beta} }
      { \left(\Psi_{2\alpha} \Psi_{2\beta} \right)^2  } 
		 = x' \circ 2\beta - x' \circ 2\alpha,
    \]
    which is an example of the unbiased case.
\end{proof}

\begin{corollary}[First recurrence relation]
	\label{cor:rec}
	For non-zero isogenies $\alpha, \beta, \gamma : E \rightarrow E'$, we have the recurrence
	\begin{equation}
		\label{eqn:bigrec}
		 \frac{\Psi_{\alpha + \beta} \Psi_{\alpha - \beta} \widehat{\Psi}_\alpha \widehat{\Psi}_\beta}{ \Psi_\alpha^2 \Psi_\beta^2  \widehat{\Psi}_{\alpha + \beta} } 
		 +\frac{\Psi_{\beta + \gamma} \Psi_{\beta - \gamma} \widehat{\Psi}_\beta \widehat{\Psi}_\gamma}{ \Psi_\beta^2 \Psi_\gamma^2  \widehat{\Psi}_{\beta + \gamma} } 
		 +\frac{\Psi_{\gamma + \alpha} \Psi_{\gamma - \alpha} \widehat{\Psi}_\gamma \widehat{\Psi}_\alpha}{ \Psi_\gamma^2 \Psi_\alpha^2  \widehat{\Psi}_{\gamma + \alpha} } 
		 =0.
	\end{equation}
\end{corollary}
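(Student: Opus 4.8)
The plan is to deduce this directly from the Relation to $x$ (Theorem~\ref{thm:relationtox}), of which it is an essentially immediate corollary. First I would observe that each of the three summands on the left of \eqref{eqn:bigrec} is, term for term, an instance of the left-hand side of \eqref{eqn:diff}: the first summand is exactly the expression in \eqref{eqn:diff} applied to the ordered pair $(\alpha,\beta)$, the second is the same expression applied to $(\beta,\gamma)$, and the third to $(\gamma,\alpha)$. (Here one should check that the arrangement of the $\Psi$ and $\widehat\Psi$ factors in each numerator and denominator matches the template of \eqref{eqn:diff} under the relevant cyclic relabeling, which it does by inspection.)

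Applying Theorem~\ref{thm:relationtox} three times then gives
\[
\frac{\Psi_{\alpha + \beta} \Psi_{\alpha - \beta} \widehat{\Psi}_\alpha \widehat{\Psi}_\beta}{ \Psi_\alpha^2 \Psi_\beta^2 \widehat{\Psi}_{\alpha + \beta} } = x' \circ \alpha - x' \circ \beta, \qquad
\frac{\Psi_{\beta + \gamma} \Psi_{\beta - \gamma} \widehat{\Psi}_\beta \widehat{\Psi}_\gamma}{ \Psi_\beta^2 \Psi_\gamma^2 \widehat{\Psi}_{\beta + \gamma} } = x' \circ \beta - x' \circ \gamma,
\]
together with $\frac{\Psi_{\gamma + \alpha} \Psi_{\gamma - \alpha} \widehat{\Psi}_\gamma \widehat{\Psi}_\alpha}{ \Psi_\gamma^2 \Psi_\alpha^2 \widehat{\Psi}_{\gamma + \alpha} } = x' \circ \gamma - x' \circ \alpha$. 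Summing the three right-hand sides, the terms cancel in pairs and yield $0$, which is exactly the claimed identity. There is no genuine obstacle here: the only point requiring minor care is confirming that the cyclic symmetry of the three summands really does produce the telescoping cancellation $(x'\circ\alpha - x'\circ\beta) + (x'\circ\beta - x'\circ\gamma) + (x'\circ\gamma - x'\circ\alpha) = 0$, and that each fraction is well-defined (the $\widehat\Psi$ factors being unambiguous by Lemma~\ref{lemma:invariant}), both of which are routine. This mirrors the classical deduction of \eqref{eqn:ellrec} from the relation to $x$ in the unbiased theory.
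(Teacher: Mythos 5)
Your proof is correct and matches the paper's intent exactly: the paper states this as an immediate corollary of Theorem~\ref{thm:relationtox}, with the proof left implicit precisely because it is the telescoping sum $(x'\circ\alpha - x'\circ\beta) + (x'\circ\beta - x'\circ\gamma) + (x'\circ\gamma - x'\circ\alpha) = 0$ that you describe. No gaps; your verification that each summand instantiates \eqref{eqn:diff} under cyclic relabeling is the only point of care, and you handled it.
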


\begin{remark}
  \label{rem:tilde}
Define another auxiliary kernel function $\widetilde\Psi_\phi$ with the principal divisor 
\[
  \widetilde{D}_\phi = 2 \phi^*(\mathcal{O}') {- 2 (\deg\phi) (\mathcal{O})}, 
\]
i.e., we require, as a means of normalization, that
\begin{equation*}
	\frac{ t^{2\deg \phi} \widetilde\Psi_\phi }{(t' \circ \phi)^2} (\mathcal{O}) 
	=  
	\left( \frac{dt}{\omega} (\mathcal{O}) \right)^{2\deg \phi} 
	\left( \frac{dt'}{\omega'} (\mathcal{O}') \right)^{-2}.
\end{equation*}
The notations $\widetilde\Psi_\phi$ and $\Psi_\phi$ agree with Satoh \cite[Definition 3.1]{Satoh} for unbiased endomorphisms, up to a factor of $\pm 1$.
In the case of endomorphisms for which all sums and differences of $\alpha,\beta,\gamma$ are unbiased, Satoh obtains \cite[Corollary 3.7]{Satoh} the special case
	\begin{equation*}
		 \frac{\Psi_{\alpha + \beta} \Psi_{\alpha - \beta} }{ \widetilde{\Psi}_{\alpha}\widetilde{\Psi}_\beta } 
		 +\frac{\Psi_{\beta + \gamma} \Psi_{\beta - \gamma} }{ \widetilde{\Psi}_\beta \widetilde{\Psi}_\gamma } 
		 +\frac{\Psi_{\gamma + \alpha} \Psi_{\gamma - \alpha}}{ \widetilde{\Psi}_\gamma \widetilde{\Psi}_\alpha } 
		 =0.
	\end{equation*}
Recall that $\widetilde{\Psi}_\alpha = \Psi_{\alpha}^2$ when $\alpha$ is unbiased, and $\widetilde{\Psi}_\alpha \widehat{\Psi}_\alpha = \Psi_\alpha^2$ in general; if all subscripts are unbiased this recovers the usual form \eqref{eqn:ellrec}.
\end{remark}

We now give the second, more general recurrence relation.  (Mazur and Tate do not address this one in the unbiased case.)

\begin{theorem}[Second relation to $x$]
	\label{thm:relationtox2}
	Let $\alpha$, $\beta$ and $\gamma$ be isogenies from $E$ to $E'$.  Then, when defined,
	\begin{equation}
    \label{eqn:zeta}
		\frac{
      \Psi_{\alpha + \beta + \gamma} \Psi_{\alpha - \beta} \Psi_{\gamma}
	 }{
		 {\Psi}_{\alpha+\gamma} {\Psi}_{\beta+\gamma} \Psi_\alpha \Psi_\beta 
	 } 
	 \sqrt{\frac{
		 \widehat{\Psi}_{\alpha+\gamma} \widehat{\Psi}_{\beta+\gamma} \widehat\Psi_\alpha \widehat\Psi_\beta 
	 }{
     \widehat\Psi_{\alpha + \beta + \gamma} \widehat\Psi_{\alpha - \beta}\widehat{\Psi}_\gamma
	 }}
   =
\frac{
     y' \circ \beta - y' \circ \gamma
   }{
     x' \circ \beta - x' \circ \gamma
   }
   -
 \frac{
     y' \circ \alpha - y' \circ \gamma
   }{
     x' \circ \alpha - x' \circ \gamma
   }
   .
	\end{equation}
\end{theorem}

\begin{proof}
  The cube law on three items $\alpha, \beta + \gamma, -\beta$ takes the form
  \[ 
    q(\alpha + \gamma) - q(\alpha - \beta) - q(\gamma) - q(\alpha +\beta + \gamma) + q(\alpha) + q(\beta + \gamma) + q(\beta) - q(0) = 0.
  \]
	Therefore the leftmost quotient is an instance of the cube law, so that Lemma~\ref{lemma:quadrel} applies.  Then the left side has divisor
	\[
    K_{\alpha + \beta + \gamma}
    + K_{\alpha - \beta} 
    - K_{\alpha + \gamma}
		- K_\alpha 
    - K_{\beta + \gamma}
		- K_\beta + K_\gamma.
	\]
  To see that this is the divisor of the right side, we again pullback from $E' \times E'$ as in the proof of Theorem~\ref{thm:relationtox}.  In this case, we know the divisor on $E' \times E'$ using classical formulas in complex analysis (see \cite[Lemma 3.6]{KateANT}):
  \[
    \frac{1}{2} \frac{ \wp'(a) - \wp'(c)}{\wp(a)-\wp(c)} - 
    \frac{1}{2} \frac{ \wp'(b) - \wp'(c)}{\wp(b)-\wp(c)} 
    =
    \zeta(a+c)-\zeta(a)-\zeta(b+c)+\zeta(b) = \frac{\sigma(a+b+c)\sigma(c)\sigma(a-b)}{\sigma(a+c)\sigma(b+c)\sigma(a)\sigma(b)}.
  \]

  Next, we check the constant.  
  By Lemma~\ref{lemma:squareroot} and applying \eqref{eqn:leading} to the normalization of the square root factor in \eqref{eqn:zeta}, the $T$-expansion of said square root factor has leading coefficient 
\[
    \frac{
     a_{\iota(\alpha+\gamma)} a_{\iota(\beta+\gamma)} a_{\iota(\alpha)} a_{\iota(\beta)} 
   }{
    a_{\iota(\alpha + \beta+ \gamma)} a_{\iota(\alpha - \beta)} a_{\iota(\gamma)}
   }.
\]
Applying that fact, together with \eqref{eqn:norm}, the leading coefficient of the left hand side of \eqref{eqn:zeta} is
\[
    \frac{
    a_{\alpha + \beta+ \gamma} a_{\alpha - \beta} a_{\gamma}
   }{
     a_{\alpha+\gamma} a_{\beta+\gamma} a_{\alpha} a_{\beta} 
   }.
\]
For the right hand side, assume for the moment that the inseparable degrees of $\alpha$, $\beta$ and $\gamma$ are equal.  Then, we consider the leading coefficients of the formal expansions around the origin, using \eqref{eqn:normalized-w}, and obtain
  \begin{align*}
 \frac{
     y' \circ \beta - y' \circ \gamma
   }{
     x' \circ \beta - x' \circ \gamma
   }
 -
 \frac{
     y' \circ \alpha - y' \circ \gamma
   }{
     x' \circ \alpha - x' \circ \gamma
   }
   &= 
   \frac{
     (a_\beta T^{\deg_{in}\beta})^{-3} - (a_\gamma T^{\deg_{in}\gamma})^{-3}
   }{
     (a_\beta T^{\deg_{in}\beta})^{-2} - (a_\gamma T^{\deg_{in}\gamma})^{-2}
   }
   - 
 \frac{
     (a_\alpha T^{\deg_{in}\alpha})^{-3} - (a_\gamma T^{\deg_{in}\gamma})^{-3}
   }{
     (a_\alpha T^{\deg_{in}\alpha})^{-2} - (a_\gamma T^{\deg_{in}\gamma})^{-2}
   } + \cdots.
 \end{align*} 
Consider the following algebraic identity in abstract variables $a$, $b$, $s$:
\begin{equation}
  \label{eqn:abstract}
\frac{
    b^{-3} - s^{-3}
  }{
    b^{-2} - s^{-2}
  }
- \frac{
    a^{-3} - s^{-3}
  }{
    a^{-2} - s^{-2}
  }
  =
  \frac{
    (a + b + s)(a - b)s
  }{
    (a + s)(b + s)ab
  }.
\end{equation}
Apply the identity \eqref{eqn:abstract} to $a,b,s = a_\alpha, a_\beta, a_\gamma$ and observe, as in the proof of Theorem~\ref{thm:relationtox}, that in the equal inseparable degree case, $a_{\alpha + \beta + \gamma} = a_\alpha + a_\beta + a_\gamma$, and similarly for other such sums.  This shows that the leading scalars on both sides of \eqref{eqn:zeta} agree.

For inseparable degrees that differ, we must consider the various cases in turn.
We may assume without loss of generality that $\deg_{in}\alpha \le \deg_{in}\beta$.

If $\deg_{in}\gamma < \deg_{in}\alpha < \deg_{in}\beta$, 
Again as in the proof of Theorem~\ref{thm:relationtox}, we have $a_{\alpha + \gamma} = a_\alpha$ etc. Using these simplifications,  
the leading term at left is $a_\beta^{-1}$ and at right is $a_\gamma a_\alpha a_\gamma / (a_\gamma^2 a_\alpha a_\beta )$.  
Then we use the identity $b^{-1} = sas/(s^2ab)$.  

If $\deg_{in}\alpha < \deg_{in}\gamma < \deg_{in}\beta$, we use the identity $b^{-1} = a^2s/(asab)$.  

If $\deg_{in}\gamma < \deg_{in}\alpha = \deg_{in}\beta$, we use the identity $b^{-1} - a^{-1} = s(a-b)s/(s^2ab)$.  

If $\deg_{in}\alpha < \deg_{in}\gamma = \deg_{in}\beta$, we use the identity $\frac{b^{-3}-s^{-3}}{b^{-2}-s^{-2}} - s^{-1} = a^2s/(a(b+s)ab)$.  

If $\deg_{in}\alpha = \deg_{in}\gamma < \deg_{in}\beta$, we use the identity $s^{-1} - \frac{a^{-3}-s^{-3}}{a^{-2}-s^{-2}} = -b^2s/((a+s)bab)$.  

If $\deg_{in}\alpha < \deg_{in}\beta < \deg_{in}\gamma$, then one expands the right side of \eqref{eqn:zeta} as
  \begin{equation*}
   \frac{
     (y' \circ \beta)(x' \circ \alpha) - (y' \circ \gamma)(x' \circ \alpha) - (y' \circ \beta)(x' \circ \gamma) - (y' \circ \alpha) (x' \circ \beta) + (y' \circ \gamma)(x' \circ \beta) + (y' \circ \alpha)(x' \circ \gamma)
   }{
     (x' \circ \beta - x' \circ \gamma)
     (x' \circ \alpha - x' \circ \gamma)
   }
 \end{equation*}
 and uses the identity $\frac{s^{-3}b^{-2}}{s^{-2}s^{-2}} = \frac{a^2s}{abab}$.

If $\deg_{in}\alpha = \deg_{in}\beta < \deg_{in}\gamma$, then with this same expansion, one uses
$\frac{-s^{-3}a^{-2} + s^{-3}b^{-2}}{s^{-2}s^{-2}} = \frac{(a+b)(a-b)s}{abab}$.
\end{proof}

The following is an immediate consequence.

\begin{corollary}[Second recurrence relation]
	\label{cor:rec2}
  Wherever the following terms are defined
  \begin{equation}
    \begin{split}
    \label{eqn:bigrec2}
		\frac{
		 \Psi_{\alpha + \beta + \sigma} \Psi_{\alpha - \beta}		
	 }{
		 {\Psi}_{\alpha+\sigma} {\Psi}_{\beta+\sigma} \Psi_\alpha \Psi_\beta 
	 } 
	 \sqrt{\frac{
		 \widehat{\Psi}_{\alpha+\sigma} \widehat{\Psi}_{\beta+\sigma} \widehat\Psi_\alpha \widehat\Psi_\beta 
	 }{
		 \widehat\Psi_{\alpha + \beta + \sigma} \widehat\Psi_{\alpha - \beta}		
	 }}
   &+
	\frac{
		 \Psi_{\beta + \gamma + \sigma} \Psi_{\beta - \gamma}	
	 }{
		 {\Psi}_{\beta+\sigma} {\Psi}_{\gamma+\sigma} \Psi_\beta \Psi_\gamma 
	 } 
	 \sqrt{\frac{
		 \widehat{\Psi}_{\beta+\sigma} \widehat{\Psi}_{\gamma+\sigma} \widehat\Psi_\beta \widehat\Psi_\gamma 
	 }{
		 \widehat\Psi_{\beta + \gamma + \sigma} \widehat\Psi_{\beta - \gamma}	
	 }} \\
   &+
	\frac{
		\Psi_{\gamma + \alpha + \sigma} \Psi_{\gamma - \alpha}
	 }{
		 {\Psi}_{\gamma+\sigma} {\Psi}_{\alpha+\sigma} \Psi_\gamma \Psi_\alpha 
	 }
	 \sqrt{\frac{
		 \widehat{\Psi}_{\gamma+\sigma} \widehat{\Psi}_{\alpha+\sigma} \widehat\Psi_\gamma \widehat\Psi_\alpha 
	 }{
		 \widehat\Psi_{\gamma + \alpha + \sigma} \widehat\Psi_{\gamma - \alpha}	
	 }}
=0.
\end{split}
	\end{equation}
	In particular, if all the indices represent unbiased isogenies, then the $\Psi_\phi$ satisfy \eqref{eqn:ellrec-gen}.
\end{corollary}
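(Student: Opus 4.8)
The plan is to deduce \eqref{eqn:bigrec2} directly from the Second relation to $x$ (Theorem~\ref{thm:relationtox2}) by a telescoping argument, in exact parallel to the way \eqref{eqn:ellrec} follows from the classical relation to $x$. First I would fix $\sigma$ and apply Theorem~\ref{thm:relationtox2} to each of the three ordered pairs $(\alpha,\beta)$, $(\beta,\gamma)$, $(\gamma,\alpha)$, keeping the same third argument $\sigma$ throughout. Writing $R(\mu) := (y'\circ\mu - y'\circ\sigma)/(x'\circ\mu - x'\circ\sigma)$, the three right-hand sides are $R(\alpha)-R(\beta)$, $R(\beta)-R(\gamma)$ and $R(\gamma)-R(\alpha)$, which telescope to zero. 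Hence the sum of the three corresponding left-hand sides of Theorem~\ref{thm:relationtox2} vanishes identically.

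Next I would compare those three left-hand sides with the three summands of \eqref{eqn:bigrec2}. For each pair $(\mu,\nu)$, the left-hand side of Theorem~\ref{thm:relationtox2} is obtained from the matching summand of \eqref{eqn:bigrec2} by inserting the extra factor $\Psi_\sigma$ in the numerator and the extra factor $\widehat\Psi_\sigma$ in the denominator under the radical; that is, it equals the summand of \eqref{eqn:bigrec2} multiplied by the single factor $\Psi_\sigma\big/\sqrt{\widehat\Psi_\sigma}$. Crucially this factor depends only on $\sigma$, so it is common to all three summands and is not identically zero. Dividing the (vanishing) sum of the three left-hand sides by this common factor then yields precisely \eqref{eqn:bigrec2}.

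The delicate point, and the step I expect to require the most care, is the meaning and consistency of the square-root factors. The radicands appearing in \eqref{eqn:bigrec2} are not, on their own, instances of a finite integral quadratic identity: for $q(\phi)=\deg\phi$ the relevant combination evaluates to $\deg\sigma$ rather than $0$ (one checks $(\alpha+\sigma)^2+(\beta+\sigma)^2+\alpha^2+\beta^2-(\alpha+\beta+\sigma)^2-(\alpha-\beta)^2=\sigma^2$). Thus the individual summands of \eqref{eqn:bigrec2} need not be rational functions on $E$; only after restoring the $\widehat\Psi_\sigma$ factor does Lemma~\ref{lemma:quadrel} apply and produce a genuine kernel function. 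I would therefore carry out the division by $\Psi_\sigma\big/\sqrt{\widehat\Psi_\sigma}$ inside the quadratic extension of the function field of $E$ obtained by adjoining $\sqrt{\widehat\Psi_\sigma}$, fixing one square root of $\widehat\Psi_\sigma$ once and for all so that the common factor --- and hence each summand of \eqref{eqn:bigrec2} --- is interpreted consistently across the three terms. Since the sign-fixed, rational left-hand sides of Theorem~\ref{thm:relationtox2} determine the summands of \eqref{eqn:bigrec2} as their quotients by this single fixed factor, the telescoped identity descends unambiguously.

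Finally, for the `in particular' clause I would specialize to the case where every subscript is unbiased. Then $\iota$ of each index is $0$, so every $\widehat\Psi$ equals $\widehat\Psi_0=1$ and all radicals reduce to $1$; the relation collapses to $\sum_{\mathrm{cyc}}\Psi_{\alpha+\beta+\sigma}\Psi_{\alpha-\beta}\big/(\Psi_{\alpha+\sigma}\Psi_{\beta+\sigma}\Psi_\alpha\Psi_\beta)=0$. Clearing denominators by the common factor $\Psi_{\alpha+\sigma}\Psi_{\beta+\sigma}\Psi_{\gamma+\sigma}\Psi_\alpha\Psi_\beta\Psi_\gamma$ and setting $(p,q,r,s)=(\alpha,\beta,\gamma,\sigma)$ turns the first summand into $\Psi_{\alpha+\beta+\sigma}\Psi_{\alpha-\beta}\Psi_{\gamma+\sigma}\Psi_\gamma$, which matches the first term of \eqref{eqn:ellrec-gen}; the remaining two terms match cyclically, recovering \eqref{eqn:ellrec-gen} verbatim.
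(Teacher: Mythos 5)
Your proof is correct and takes essentially the same route as the paper, which states Corollary~\ref{cor:rec2} as an immediate consequence of Theorem~\ref{thm:relationtox2}, namely the cyclic telescoping of the right-hand sides that you carry out explicitly. Your careful handling of the common factor $\Psi_\sigma\big/\sqrt{\widehat\Psi_\sigma}$ --- observing that the corollary's radicands miss the quadratic-identity hypothesis of Lemma~\ref{lemma:squareroot} by exactly $q(\sigma)$, so each summand must be interpreted via one fixed choice of $\sqrt{\widehat\Psi_\sigma}$ used consistently in all three terms --- correctly makes precise a point the paper leaves implicit.
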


In order to compare this statement with Corollary~\ref{cor:rec}, observe that
\[
  \frac{ \widehat{\Psi}_\alpha \widehat{\Psi}_\beta }{ \widehat{\Psi}_{\alpha + \beta} } =
  \sqrt{
    \frac{ \widehat{\Psi}_\alpha^2 \widehat{\Psi}_\beta^2 }
    { \widehat{\Psi}_{\alpha + \beta} \widehat{\Psi}_{\alpha - \beta}}
  }.
\]

\section{Specialization and elliptic nets}

Specializing (evaluating) $\Psi_\phi$ at a specific point $P \in E$, we obtain a sequence of values of the field satisfying the recurrence relations \eqref{eqn:bigrec} and \eqref{eqn:bigrec2}.  
Observe that we have a choice of $\omega_i$.  Fix a particular point $P \in E$.  We can choose $\omega_i$, $i = 1,2,3$ so that $\widehat{\Psi}_i(P) = 1$; by our convention \eqref{eqn:convention}, this dictates the choice of $\omega = \omega_0$.  
In this case, the extra factors of $\widehat{\Psi}$ disappear and we recover the usual recurrences \eqref{eqn:ellrec} and \eqref{eqn:ellrec-gen} for that value of $P$.  We cannot, however, choose such a normalization globally (that is, simultaneously for all $P$).

\begin{definition}
Fix a point $P \in E$.  The collection $\Psi_\phi(P)$, as $\phi$ varies, is called \emph{consonant} if the $\omega_i$ are chosen so that $\widehat{\Psi}_i(P) = 1$ for all $i=1,2,3$.
\end{definition}

\begin{lemma}
  A consonant collection $\Psi_\phi(P)$ satisfies \eqref{eqn:ellrec} and \eqref{eqn:ellrec-gen}.
\end{lemma}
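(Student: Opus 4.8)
The plan is to reduce both recurrences to their classical counterparts for the image point $Q := \phi_0(P) \in E'$, where $\phi_0$ is the base isogeny generating the cyclic family $\phi = n\phi_0$ whose specializations $\Psi_n := \Psi_{n\phi_0}(P)$ are the sequence indexed by $n \in \ZZ$. The engine is the pair of ``relation to $x$'' results (Theorem~\ref{thm:relationtox} and Theorem~\ref{thm:relationtox2}): once every $\widehat{\Psi}$-factor has been set to $1$, these become exactly the classical relations to $x'$ and $y'$ along $[n]Q$, from which \eqref{eqn:ellrec} and \eqref{eqn:ellrec-gen} follow by the same telescoping by which Property (3) follows from Property (2) in the introduction. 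Throughout I would use the basic consequence of consonance that $\widehat{\Psi}_\phi(P) = \widehat{\Psi}_{\iota(\phi)}(P) = 1$ for \emph{every} isogeny $\phi$, since $\widehat{\Psi}_0 = 1$ and $\widehat{\Psi}_i(P) = 1$ for $i=1,2,3$ by hypothesis.

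For \eqref{eqn:ellrec} I would specialize \eqref{eqn:bigrec} (equivalently \eqref{eqn:diff}) at $P$ with $(\alpha,\beta,\gamma) = (p\phi_0, q\phi_0, r\phi_0)$. Every $\widehat{\Psi}$ in \eqref{eqn:bigrec} carries a subscript of the form $m\phi_0$ and hence evaluates to $1$ at $P$, so each summand collapses to $\Psi_{p+q}\Psi_{p-q}/(\Psi_p^2\Psi_q^2)$ and its cyclic rotations; clearing the denominator $\Psi_p^2\Psi_q^2\Psi_r^2$ gives \eqref{eqn:ellrec} verbatim. No square roots intervene, so this step is routine.

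For \eqref{eqn:ellrec-gen} I would specialize Theorem~\ref{thm:relationtox2} at $P$ with $\sigma = s\phi_0$ and $(\alpha,\beta)$ running cyclically through $(p\phi_0,q\phi_0)$, $(q\phi_0,r\phi_0)$, $(r\phi_0,p\phi_0)$, then sum the three instances; the right-hand sides telescope to $0$, while the left-hand sides, after clearing denominators, are the three terms of \eqref{eqn:ellrec-gen} \emph{provided each square-root factor equals} $1$ at $P$. This last point is where I expect the real difficulty: the factor $\sqrt{\widehat{\Psi}_{\alpha+\sigma}\widehat{\Psi}_{\beta+\sigma}\widehat{\Psi}_\alpha\widehat{\Psi}_\beta/(\widehat{\Psi}_{\alpha+\beta+\sigma}\widehat{\Psi}_{\alpha-\beta}\widehat{\Psi}_\sigma)}$ is a specific kernel function whose square is $1$ at $P$, so a priori it is only $\pm1$, and an inconsistent sign among the three terms would break the recurrence.

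The resolution I would pursue runs through the proof of Lemma~\ref{lemma:squareroot}. Because all indices are multiples of $\phi_0$ and $\rho$ is reduction modulo $2$, one has $\rho(n\phi_0) = n\,\rho(\phi_0) \in \{0,\rho(\phi_0)\}$, so the $\FF_2$-span of $\rho(\alpha),\rho(\beta),\rho(\sigma)$ is at most one-dimensional. This places us in the lower-dimensional case of that proof, where the multiplicities $a_1,a_2,a_3$ are all even; the square root is then not a formal symbol but a genuine integer-power product of the $\widehat{\Psi}_i$ (equal to it by Lemma~\ref{lemma:invariant}), and hence takes the unambiguous value $1$ at the consonant point. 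I would flag that one should route this through Theorem~\ref{thm:relationtox2} rather than directly through Corollary~\ref{cor:rec2}: the individual roots appearing in \eqref{eqn:bigrec2} have two-torsion multiplicity congruent to $s$ and so are not even-type when $s$ is odd, whereas the root attached to the genuine quadratic identity of Theorem~\ref{thm:relationtox2} (the one carrying the extra $\widehat{\Psi}_\sigma$) is always even-type for this family. With all square roots equal to $1$, the cyclic sum yields \eqref{eqn:ellrec-gen}.
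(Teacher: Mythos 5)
Your treatment of \eqref{eqn:ellrec} is correct and coincides with the paper's own (essentially unstated) argument: specialize \eqref{eqn:bigrec} at the consonant point, where every $\widehat{\Psi}$-factor equals $1$, and clear denominators; note that this step needs no restriction on the indices at all, since \eqref{eqn:bigrec} contains no square roots. For \eqref{eqn:ellrec-gen} you have correctly isolated the one genuine subtlety, which the paper passes over in a single sentence (``the extra factors of $\widehat{\Psi}$ disappear''): the square-root factors are specific kernel functions whose squares equal $1$ at $P$, hence a priori only $\pm 1$ there, and an inconsistent sign would destroy the recurrence. Your even-type argument through the proof of Lemma~\ref{lemma:squareroot} is the right tool for controlling that sign. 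The gap is the scope you impose at the outset: you assume the collection is the cyclic family $\Psi_{n\phi_0}(P)$, $n \in \ZZ$, generated by a single isogeny $\phi_0$. The lemma is about the collection indexed by all of $\Hom(E,E')$, with the integer indices of \eqref{eqn:ellrec} and \eqref{eqn:ellrec-gen} replaced by arbitrary isogenies; this is exactly the form invoked in the proof of Theorem~\ref{thm:recover}, where the indices run over $\sum_i a_i \phi_i$ for several $\phi_i$, a lattice of rank possibly greater than one. As written, your proof establishes only the rank-one case.

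This restriction is not cosmetic, because it is precisely what powers your sign argument. If the restrictions $\rho(\alpha), \rho(\beta), \rho(\sigma)$ to $E[2]$ span at most a $2$-dimensional subspace of $\Hom_{\FF_2}(E[2],E'[2])$ (automatic in rank one, and also in rank two, e.g.\ CM curves), then by the dichotomy in the proof of Lemma~\ref{lemma:squareroot} all two-torsion multiplicities are even, the root is an honest product $\prod_i \widehat{\Psi}_i^{m_i}$, and its value at $P$ is $1$. But when $\Hom(E,E')$ has rank $4$ (supersingular curves), $\rho$ surjects onto $\Hom_{\FF_2}(E[2],E'[2])$, so one may choose $\alpha,\beta,\sigma$ with independent restrictions; then all three multiplicities are odd, and the root is instead $\Psi_2 \cdot \prod_i \widehat{\Psi}_i^{m_i}$ (equality of kernel functions, by Lemma~\ref{lemma:invariant}), whose value at $P$ is $\Psi_2(P)$. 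Consonance together with the convention \eqref{eqn:convention} gives $\Psi_2^2 = \widehat{\Psi}_1\widehat{\Psi}_2\widehat{\Psi}_3$ as functions, hence only $\Psi_2(P) = \pm 1$, and nothing in the definition of consonance pins down this sign; both choices are realizable by rescaling the $\omega_i$. Taking, say, $\gamma = \beta + 2\delta$, so that $\rho(\gamma) = \rho(\beta)$, the three cyclic terms $T_1, T_2, T_3$ of \eqref{eqn:ellrec-gen} inherit the mixed signs $(\Psi_2(P), 1, \Psi_2(P))$ from Theorem~\ref{thm:relationtox2}, and when $\Psi_2(P) = -1$ the identity that actually descends is $-T_1 + T_2 - T_3 = 0$, not \eqref{eqn:ellrec-gen}. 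So your worry about inconsistent signs is exactly on target, but your resolution does not cover the lemma's intended scope: one must either confine the indices to a rank-$\le 2$ family, or normalize the consonant choice further so that $\Psi_2(P) = +1$ --- a point on which the paper itself is silent, and which your analysis, extended to higher rank, in fact exposes.
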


Next we recall some general results classifying collections satisfying \eqref{eqn:ellrec-gen}.

Net polynomials \cite{KateANT} generalize division polynomials.  Define for any vector $\vec a = (a_1,\ldots, a_k) \in \ZZ^k$, an elliptic function $\Psi_{\vec a}$ on $E^k$ with divisor
\begin{equation}
	\label{eqn:phidiv}
	\left(\sum a_i P_i = \mathcal{O}\right)
	- \sum_i a^2_i ({P}_i = \mathcal{O})
	- \sum_{i < j} a_ia_j \left( ({P}_i + {P}_j = \mathcal{O}) - (P_i = \mathcal{O}) - (P_j = \mathcal{O}) \right),
\end{equation}
and normalized in a manner similar to the previous cases, namely, where we denote by $\sigma$ the summation function $(P_1,\ldots,P_k) \mapsto P_1 + \cdots + P_k$, and by $\pi_i$ the projection onto the $i$-th component, and require
\begin{equation}
	\label{eqn:norm-arb-gen}
	\frac{ 
	\Psi_{\vec a} 
	\prod_i (t^{a_i^2 - \sum_{j \neq i} a_ia_j} \circ \pi_i) 
	\prod_{i < j} t^{a_ia_j} \circ (\sigma \circ (\pi_i \times \pi_j)) 
	}{
		t \circ \sigma \circ (a_1\times \cdots \times a_k)
	} (\mathcal{O}) 
	= \left( \frac{dt}{\omega} (\mathcal{O}) \right)^{\sum_i a_i^2 - \sum_{i < j} a_i a_j - 1}.
\end{equation}
The means of normalizing in \cite{KateANT} differs, but amounts to the same thing: in both means of normalizing, the dependence on $\omega$ is the same \cite[Proposition 7.1]{KateANT}, and $\Psi_{\mathbf{e}_i}(P) = 1$ for the standard basis vectors $\mathbf{e}_i$.

Interpreting the indices of \eqref{eqn:ellrec} and \eqref{eqn:ellrec-gen} as elements of $\ZZ^k$, the $\Psi_{\vec a}$ satisfy both recurrences \cite[Theorem 4.1]{KateANT}.  More generally, we call any $k$-dimensional array which satisfies \eqref{eqn:ellrec-gen} an \emph{elliptic net} \cite[Definition 1.1]{KateANT}.

The $\psi_{\vec a}$ also satisfy the usual relationship to $x$ \cite[Lemma 4.2]{KateANT}, and a version of the chain rule \cite[Proposition 4.3]{KateANT}, namely, for a $k \times k$ linear transformation $T$, and standard basis vectors $\mathbf{e}_i$,
\[
	(\Psi_{\vec a} \circ T)
	\prod_{i=1}^k \Psi_{T^{tr}(\mathbf{e}_i)}^{a_i^2 - \sum_{j \neq i} a_i a_j}
	\prod_{i < j} \Psi_{T^{tr}(\mathbf{e}_i+\mathbf{e}_j)}^{a_ia_j}
	=
	\Psi_{T^{tr}(\vec a)}.
\]

Ward's theorem classifying elliptic divisibility sequences extends to elliptic nets \cite[Theorem 7.4]{KateANT}.  This result states that, up to appropriate equivalences and normalizations and degenerate cases, $n$-dimensional arrays satisfying \eqref{eqn:ellrec-gen} are in bijection with tuples $(E, P_1, \ldots, P_n)$.  In particular, since for a fixed point $P$, the $\Psi_{\phi_i}(P)$ satisfy \eqref{eqn:ellrec-gen} (after suitable normalization), we can conclude that they form an elliptic net of the form $\Psi_{\vec a}(P_1, \ldots, P_k)$ for some choice of curve $E$ and points $P_i \in E$ (we say it is the elliptic net \emph{associated to} this choice of curve and points).  The following theorem verifies this constructively.

\begin{theorem}
  \label{thm:recover}
	Let $\phi_1, \ldots, \phi_n \in \Hom(E,E')$.  Let $P \in E$.  Suppose that $\phi_i(P) \notin E'[2]$ and $(\phi_i \pm \phi_j)(P) \neq 0$ for all $i,j$.
	Choose the $\omega_i$ so that the resulting division polynomials $W(\vec a) := \Psi_{\sum a_i \phi_i}(P)$ form a consonant collection.  Then they form an elliptic net associated to $E'$ and the points $\phi_i(P)$.
\end{theorem}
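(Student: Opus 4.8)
The plan is to identify the array $W(\vec{a}) = \Psi_{\sum a_i \phi_i}(P)$ with the net polynomial attached to the curve $E'$ and the points $Q_i := \phi_i(P)$. Write $V(\vec{a}) := \Psi_{\vec{a}}(Q_1, \ldots, Q_n)$ for the latter; by \cite[Theorem 4.1]{KateANT} it is an elliptic net associated to $(E', Q_1, \ldots, Q_n)$. Since the collection is consonant, the unlabeled lemma above (a consonant collection satisfies \eqref{eqn:ellrec} and \eqref{eqn:ellrec-gen}) already shows that $W$ is itself an elliptic net, so the remaining task is to pin down its associated curve and points, which I will do by matching the relation to $x$. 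The two hypotheses play complementary roles: because an isogeny carries $2$-torsion to $2$-torsion, $\phi_i(P) \notin E'[2]$ forces $P \notin E[2]$, so $\widehat{\Psi}_i(P) \neq 0, \infty$ for $i=1,2,3$ and the consonant choice of the $\omega_i$ is possible; meanwhile $\phi_i(P) \notin E'[2]$ and $(\phi_i \pm \phi_j)(P) \neq \mathcal{O}'$ guarantee that $W(\mathbf{e}_i)$ and $W(\mathbf{e}_i \pm \mathbf{e}_j)$ are finite and nonzero, i.e.\ the net is non-degenerate and the ratios below are defined.

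The key step is to evaluate the Relation to $x$ (Theorem~\ref{thm:relationtox}) at the point $P$. First I would observe that in a consonant collection one has $\widehat{\Psi}_\alpha(P) = 1$ for \emph{every} $\alpha \in \Hom(E, E')$: indeed $\widehat{\Psi}_\alpha = \widehat{\Psi}_{\iota(\alpha)}$, and $\widehat{\Psi}_0 = 1$ while $\widehat{\Psi}_i(P) = 1$ for $i = 1,2,3$ by consonance. Applying \eqref{eqn:diff} with $\alpha = \sum a_i \phi_i$ and $\beta = \sum b_i \phi_i$, every $\widehat{\Psi}$ factor evaluates to $1$ at $P$, and since $\alpha(P) = \sum a_i Q_i$ and $\beta(P) = \sum b_i Q_i$, I obtain
\[
  \frac{W(\vec{a} + \vec{b})\, W(\vec{a} - \vec{b})}{W(\vec{a})^2\, W(\vec{b})^2}
  = x'\!\left(\textstyle\sum a_i Q_i\right) - x'\!\left(\textstyle\sum b_i Q_i\right).
\]
This is exactly the net relation to $x$ satisfied by $V$ according to \cite[Lemma 4.2]{KateANT}, so $W$ and $V$ have identical right-hand sides in this identity.

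Finally I would compare $W$ and $V$ directly. Both vanish precisely when $\sum a_i Q_i = \mathcal{O}'$, so $g := W/V$ is a well-defined nowhere-zero function of $\vec{a} \in \ZZ^n$, and dividing the two relations to $x$ yields the multiplicative parallelogram law $g(\vec{a} + \vec{b})\, g(\vec{a} - \vec{b}) = g(\vec{a})^2\, g(\vec{b})^2$ wherever the denominators do not vanish. The general solution of this equation over a field has the quadratic-exponential shape $g(\vec{a}) = c \prod_i u_i^{a_i} \prod_{i \le j} v_{ij}^{a_i a_j}$, which is precisely the rescaling defining the equivalence of elliptic nets in \cite{KateANT}; hence $W$ is scale-equivalent to $V$, i.e.\ $W$ is the elliptic net associated to $E'$ and the points $\phi_i(P)$. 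I expect the main obstacle to be the two-torsion bookkeeping of the key step --- confirming that $\widehat{\Psi}_\alpha(P) = 1$ survives for biased $\alpha$ so that the consonant normalization removes \emph{every} correction factor at $P$ --- together with checking that the functional equation for $g$, initially valid only where denominators are nonzero, propagates across all of $\ZZ^n$ via the recurrence, so that the rescaling is globally of the stated quadratic form.
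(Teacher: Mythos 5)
Your opening steps are correct, and they take a genuinely different route from the paper: the observation that consonance forces $\widehat{\Psi}_\alpha(P)=1$ for \emph{every} $\alpha\in\Hom(E,E')$, biased or not (since $\widehat{\Psi}_\alpha=\widehat{\Psi}_{\iota(\alpha)}$ and $\widehat{\Psi}_0=1$), so that Theorem~\ref{thm:relationtox} evaluated at $P$ gives
\begin{equation*}
\frac{W(\vec a+\vec b)\,W(\vec a-\vec b)}{W(\vec a)^2\,W(\vec b)^2}
= x'\Bigl(\sum_i a_iQ_i\Bigr)-x'\Bigl(\sum_i b_iQ_i\Bigr),
\end{equation*}
is sound, as is the matching of the zero sets of $W$ and $V$. (The paper instead applies the chain rule, Theorem~\ref{thm:chain}, to the one-dimensional subsequences $W(n\mathbf{e}_i)=\Psi_n(\phi_i(P))\Psi_{\phi_i}(P)^{n^2}$, and feeds the finitely many non-vanishing conditions --- essentially the ones you checked --- into the classification theorem \cite[Theorem 7.4]{KateANT}.)

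The gap is in your final step. The shape $g(\vec a)=c\prod_iu_i^{a_i}\prod_{i\le j}v_{ij}^{a_ia_j}$ is the general solution of the multiplicative \emph{cube} law (the identity characterizing quadratic functions), not of the parallelogram law you actually derived: substituting this shape into $g(\vec a+\vec b)g(\vec a-\vec b)=g(\vec a)^2g(\vec b)^2$ forces $c^2=u_i^2=1$, and conversely the parallelogram law admits solutions of an entirely different kind, namely any $\{\pm1\}$-valued function factoring through $(\ZZ/2\ZZ)^n$, because $\vec a+\vec b\equiv\vec a-\vec b\pmod 2$. For $n\ge 3$ such a function, e.g.\ $g(\vec a)=(-1)^{a_1a_2a_3}$, is not multiplicatively quadratic, hence is not one of the rescalings permitted in the scale equivalence of elliptic nets in \cite{KateANT}; so the parallelogram law alone cannot deliver the conclusion that $W$ is scale-equivalent to $V$. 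To repair this you would need the cube-law constraint on $g$ as well, e.g.\ by also evaluating Theorem~\ref{thm:relationtox2} at $P$ (consonance again removes every $\widehat{\Psi}$ factor) and comparing with the corresponding cube-law identity for net polynomials. Separately, the $0/0$ problem you flag at the end is real and unresolved: $g$ is undefined on the whole subgroup $\{\vec a:\sum_ia_iQ_i=\mathcal{O}'\}$, which can have finite index in $\ZZ^n$ (for instance whenever all the $Q_i$ are torsion, as happens over a finite field; the hypotheses only exclude relations of the special shapes $\phi_i(P)\in E'[2]$ and $(\phi_i\pm\phi_j)(P)=\mathcal{O}'$), and ``propagates via the recurrence'' is not yet an argument that a single global quadratic rescaling exists. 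The paper's route through \cite[Theorem 7.4]{KateANT} avoids both difficulties, since that theorem needs only the finitely many non-degeneracy conditions at the indices $\mathbf{e}_i$, $2\mathbf{e}_i$, and $\mathbf{e}_i\pm\mathbf{e}_j$.
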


\begin{proof}
	This is an application of the chain rule.  In particular, since the collection is consonant, it forms an elliptic net in the sense that it satisfies \eqref{eqn:ellrec-gen}, where the indices $\vec a$ are interpreted as $\sum_i a_i \phi_i$.  To apply \cite[Theorem 7.4]{KateANT}, we require that the elliptic net be non-degenerate.  That is, we require $\Psi_{\phi_i}(P)$, $\Psi_{2\phi_i}(P)$, $\Psi_{\phi_i + \phi_j}(P)$ and $\Psi_{\phi_i - \phi_j}(P)$ to be non-zero.  This is guaranteed by the hypotheses.

  To figure out which curve and points this elliptic net represents, we can look at elliptic divisibility sequences $\Psi_{n\phi_i}(P)$, $n \in \ZZ$ for each $i$.  Fixing $i$, by the chain rule (Theorem~\ref{thm:chain}), using the assumption the collection is consonant,
	\[
		\Psi_{n \phi_i}(P)
		= \Psi_{n}(\phi_i(P)) \Psi_{\phi_i}(P)^{n^2}.
	\]
	Thus, up to normalization, the associated curve and point are $E'$ and $\phi_i(P)$.  From this we conclude that the elliptic net is that associated to $(E', \phi_1(P), \ldots, \phi_n(P))$.
\end{proof}

This shows that the collection of division polynomials $\Psi_\alpha$ for $\alpha \in \End(E)$ (differentials suitably normalized) form an elliptic net whose rank is equal to the rank of the endomorphism ring $\End(E)$.  In particular, by the classification theorem for the endomorphism ring of an elliptic curve, these are equivalent to an elliptic net associated to a single, pair or quadruple of points.

The term \emph{magnified} has been applied to elliptic divisibility sequences associated to image points of rational isogenies; see for example \cite{Magnified}.  The theorem can be read as saying that the collections $\Psi_\phi(P)$ can be interpreted as `magnified elliptic nets' of rank one, two or four.

\section{Examples}
\label{sec:examples}

We will compute generalized division polynomials for Gaussian CM (comparing to Ward's case), for an isogeny, and for a supersingular curve with inseparability.  We use these to verify examples of the chain rule, recurrence relation, and relation to $x$.  

\subsection{An elliptic curve with CM by $\ZZ[i]$}

Let $E: y^2 = x^3 - x$, which is an elliptic curve over $\QQ$ with complex multiplication by $\ZZ[i]$.  Use the usual invariant differential $\omega = dx/2y$, and uniformizer $T = -x/y$.  We have $[i]:(x,y) \mapsto (-x,yi)$ and $E[2] = \{ (0,0), (1,0), (-1,0) \}$.
We also have
\begin{align*}
  \Psi_1 &= 1, \quad
  \Psi_2 = -2y, \quad
  \Psi_3 = 3\left(x^4 - 2x^2 - \frac{1}{3}\right), \\
  \Psi_4 &= -4y(x+i)(x-i)(x^2 - 2x - 1)(x^2 + 2x-1), \\
  \Psi_5 &= 5\left(x^2 - \frac{2}{5}i - \frac{1}{5}\right)\left(x^2 + \frac{2}{5}i - \frac{1}{5}\right)\left(x^8 - 12x^6 - 26x^4 + 52x^2 + 1\right).
\end{align*}
Now, $\Psi_i$ is constant (begin unbiased with trivial kernel), and to determine the constant, we have the requirement that
\[
  \frac{T \Psi_i}{T \circ [i]}(\mathcal{O}) = 1,
\]
which implies that $\Psi_i = i$.
Similarly (see Section~\ref{sec:insep} for more details on such computations in more challenging cases), we can compute
\begin{align*}
  \Psi_i &= i, \quad
  \Psi_{1+i} = 2ix, \quad
  \Psi_{1-i} = 2x, \\
  \Psi_{1+2i} &= (1+2i)\left(x^2 + \frac{2}{5}i - \frac{1}{5}\right) = (1+2i)x^2 - 1, \\
  \Psi_{1-2i} &= (1-2i)\left(x^2 - \frac{2}{5}i - \frac{1}{5}\right) = (1-2i)x^2 -1, \\
  \Psi_{2+i} &= (2+i)\left(x^2 - \frac{2}{5}i - \frac{1}{5}\right) = (2+i)x^2 - i, \\
  \Psi_{2-i} &= (2-i)\left(x^2 + \frac{2}{5}i - \frac{1}{5}\right) = (2-i)x^2 + i.
\end{align*}

Next we verify the recurrence relation in one case.
Let $\alpha = 1+i$, $\beta = i$, $\gamma = 1$.  Then we have
\begin{align*}
		 \frac{\Psi_{\alpha + \beta} \Psi_{\alpha - \beta} \widehat{\Psi}_\alpha \widehat{\Psi}_\beta}{ \Psi_\alpha^2 \Psi_\beta^2  \widehat{\Psi}_{\alpha + \beta} } 
     &=
     \frac{ \Psi_{1+2i} \Psi_{1}\widehat{\Psi}_{1+i} \widehat{\Psi}_i}
     { \Psi_{1+i}^2 \Psi_{i}^2 \widehat{\Psi}_{1+2i}} 
     =
     \frac{\left( (1+2i)x^2-1 \right)2ix }
     {(2i)^2x^2i^2};\\
		 \frac{\Psi_{\beta + \gamma} \Psi_{\beta - \gamma} \widehat{\Psi}_\beta \widehat{\Psi}_\gamma}{ \Psi_\beta^2 \Psi_\gamma^2  \widehat{\Psi}_{\beta + \gamma} } 
     &=
     \frac{ \Psi_{1+i} \Psi_{i-1}\widehat{\Psi}_{i} \widehat{\Psi}_1}
     { \Psi_{i}^2 \Psi_{1}^2 \widehat{\Psi}_{1+i}} 
     =
     \frac{(2i)x(-2)x}{i^2(2i)x};
     \\
		 \frac{\Psi_{\gamma + \alpha} \Psi_{\gamma - \alpha} \widehat{\Psi}_\gamma \widehat{\Psi}_\alpha}{ \Psi_\gamma^2 \Psi_\alpha^2  \widehat{\Psi}_{\gamma + \alpha} } 
     &=
     \frac{ \Psi_{2+i} \Psi_{-i}\widehat{\Psi}_{1} \widehat{\Psi}_{1+i}}
     { \Psi_{1}^2 \Psi_{1+i}^2 \widehat{\Psi}_{2+i}}
     =
     \frac{
\left( (2+i)x^2 - i \right)
(-i)(2i)x}{(2i)^2x^2 }.
\end{align*}
Plugging these in verifies the first recurrence relation (Corollary~\ref{cor:rec}) in this example.

\subsection{Comparison with Ward}
It is worth comparing to Ward's $\ZZ[i]$ case \cite{WardGaussian}.  He obtained polynomials 
  \begin{align*}
    P_1 &= 1, \quad
    P_2 = 2, \\
  P_i &= i, \quad
  P_{1+i} = 1+i, \quad
  P_{1-i} = 1-i, \\
  P_{1+2i} &= (1+2i)\left(x^2 + \frac{2}{5}i - \frac{1}{5}\right) = (1+2i)x^2 - 1, \\
  P_{3+i} &= (3+i)x^4 - 2(1+3i)x^2 + (3+i).
  \end{align*}
  His polynomials do not satisfy the usual recurrence \eqref{eqn:ellrec}; instead the polynomials $E_\alpha P_\alpha$ do, where $E_\alpha = 1$, $\sqrt{x}$ or $y$, according as\footnote{Ward referred to these three cases as $\alpha$ being \emph{odd}, \emph{oddly even} or \emph{totally even}.} the norm of $\alpha$ is $1, 2$, or $0$ modulo $4$.  We have $E_\alpha P_\alpha = \Psi_\alpha$ in the unbiased case.

  Let $\alpha = 1+i$, $\beta = i$, $\gamma = 1$.  Then setting $\overline{\Psi}_\alpha = E_\alpha P_\alpha$, we can verify the same instance of the recurrence relation by adding the following terms:
\begin{align*}
  \frac{\overline{\Psi}_{\alpha + \beta} \overline{\Psi}_{\alpha - \beta}}{ \overline{\Psi}_{\alpha}^2 \overline{\Psi}_{\beta}^2}
     &=
   \frac{ \overline{\Psi}_{1+2i} \overline{\Psi}_{1}}
 { \overline{\Psi}_{1+i}^2 \overline{\Psi}_{i}^2 } 
     =
     \frac{(1+2i)x^2 - 1}
     {(1+i)^2i^2 x};\\
     \frac{\overline{\Psi}_{\beta + \gamma} \overline{\Psi}_{\beta - \gamma}}{ \overline{\Psi}_{\beta}^2 \overline{\Psi}_{\gamma}^2 } 
     &=
   \frac{ \overline{\Psi}_{1+i} \overline{\Psi}_{i-1}}
 { \overline{\Psi}_{i}^2 \overline{\Psi}_{1}^2} 
     =
     \frac{(1+i)(1-i)x}{i^2};
     \\
     \frac{\overline{\Psi}_{\gamma + \alpha} \overline{\Psi}_{\gamma - \alpha}}{ \overline{\Psi}_{\gamma}^2 \overline{\Psi}_{\alpha}^2 } 
     &=
   \frac{ \overline{\Psi}_{2+i} \overline{\Psi}_{i}}
   { \overline{\Psi}_{1}^2 \overline{\Psi}_{1+i}^2}
     =
     \frac{
       ((2+i)x^2-i)
(-i)}{(1+i)^2x}.
\end{align*}
Ward's use of $\sqrt{x}$ was justified by his restriction to the complex analytic case.

\subsection{An isogeny division polynomial}

There is a rational isogeny from $E$ to $E' : y^2 = x^3 -11x - 14$ given by
\[
  \phi_{(1,0)}: (x,y) \mapsto \left( \frac{x^2 - x + 2}{x - 1}, \frac{y(x^2 -2x - 1)}{x^2 -2x + 1} \right)
\]
with kernel $\{ (1,0), \mathcal{O} \}$.  To compute the associated division polynomial, observe that $x-1$ has the correct divisor.  To set the normalization, choose the normalized invariant differential on the target curve (so $dT/\omega = 1 + O(T)$ as in Section~\ref{sec:recmain}) and compute
\[
\phi T = -\frac{x'}{y'} = - \frac{ (x^2 - x + 2)(x^2 - 2x + 1) }{ y(x-1)(x^2 - 2x - 1)} 
= - \frac{x}{y}\frac{ T^{-6} + \cdots}{T^{-6} + \cdots} = T + \cdots.
\]
So we obtain 
\[
\Psi_{\phi} = \widehat{\Psi}_\phi = x-1.
\]
The kernel of $\phi \circ (1+i)$ is cyclic of order $4$: 
\[
  \{ \mathcal{O}, (i, i-1), (i, -i+1), (0,0) \}.
\]
This has kernel sum $(0,0)$. 
Therefore, up to a scalar, the generalized division polynomial is 
\[
  x(x-i).
\]
Combining the known scalars for $\phi$ and $1+i$, we have
\[
  \Psi_{\phi \circ (1+i)} = 2ix(x-i).
\]
We also have 
\[
  \widehat{\Psi}_\phi \circ (1+i) = x \circ [1+i] - 1 = \frac{-i(x^2-1)}{2x} -1 = \frac{(x-i)^2}{2ix}, \quad
  \widehat{\Psi}_{\phi \circ (1+i)} = \widehat{\Psi}_{1+i} = 2ix.
\]
We verify the chain rule (Theorem~\ref{thm:chain}) by checking
\[
  \left(
  \frac{
  {\Psi}_{\phi \circ (1+i)}
}{
  ({\Psi}_\phi \circ (1+i)) {\Psi}_{1+i}^2
}\right)^2
= \frac{1}{(x-i)^2}
=  \frac{
  \widehat{\Psi}_{\phi \circ (1+i)}
}{
  (\widehat{\Psi}_\phi \circ (1+i)) \widehat{\Psi}_{1+i}^2
}.
\]

\subsection{A supersingular example with inseparability}
\label{sec:insep}

Consider the supersingular curve $E'': y^2 = x^3 + x$ over $\FF_7$.  
It has $E''[2] = \{ \mathcal{O}, (0,0), (I,0), (-I,0) \}$, where $I^2 + 1 = 0$.  
Furthermore, a basis for the $4$-torsion is given by $P = (4I, 4I+3)$ and $Q = (2I, 5I+5)$ where $2P = (I,0)$ and $2Q = (-I,0)$.  
A basis for the $3$-torsion has $x$-coordinates $\pm 1+4I$.
We can compute some classical division polynomials:
\begin{align*}
  \Psi_1 &= 1, \quad \Psi_2 = 2y, \quad \Psi_3 = 3x^4 - x^2 - 1 = 3 (x + 3I + 1)  (x + 3I + 6)  (x + 4I + 1)  (x + 4I + 6),\\
  \Psi_4 &= y(-3x^6 - x^4 + x^2 + 3) = 4y(x+1)(x-1)(x-3I)(x+3I)(x-2I)(x+2I), \\
  \Psi_5 &= -2x^{12} - x^{10} + x^6 + x^4 - x^2 + 1, \\
  \Psi_6 &= y(-x^{16} - 3x^{14} - x^8 - 3x^2 - 1), \\ 
  \Psi_7 &= - 1.
\end{align*}
Write $\FF_{49} = \FF_7[I]$ where $I^2 =-1$.  
The curve $E''$ has endomorphism ring $\ZZ + i\ZZ + \frac{1+k}{2} \ZZ + \frac{i+j}{2} \ZZ$, where $i: (x,y) \mapsto (-x, Iy)$ and $j: (x,y) \mapsto (x^7, y^7)$ is the Frobenius endomorphism, and $k=ij$.  

In order to compute further division polynomials, we begin with determining the leading coefficients of the basis elements of the endomorphism ring.
We use the normalized invariant differential and have
\[
  i T = \frac{x}{Iy} = I \left(-\frac{x}{y}\right).
\]
So we have $\Psi_i = I$. 
Next
\begin{align*}
  (i\pm j) T &= - \frac{ {\pm y^7 - Iy} + x^2 - x^{14} }{ - \frac{\pm y^7-Iy}{x^7+x}\left( {\pm y^7 - Iy} + x^2 - x^{14} \right) - Iyx^7 \mp y^7x   } =  \frac{ -Iyx }{y^2} =  I T + \cdots 
\end{align*}
Also $[2]T = 2T + \cdots$.  Therefore, we have $\frac{i\pm j}{2} T = -3I T + \cdots$.  In particular, then, $\Psi_{\frac{i\pm j}{2}} = -9T + \cdots$.  The reason for the square is \eqref{eqn:normhat}, since this endomorphism is biased and the divisor for its division polynomial is $2(P_i)-2(\mathcal{O})$ for some $2$-torsion point $P_i$.

Observe that the minimal polynomial of $\frac{\pm i \pm j}{2}$ is $x^2 + 2$ having roots $\pm 3I$ in $\FF_{49}$, which is a check against the calculation above (we expect the leading coefficient of the $T$-expansion to satisfy the minimal polynomial).  Since $\frac{-1\pm k}{2} = i \frac{i\pm j}{2}$, the leading coefficient of the $T$-expansion of $\frac{-1\pm k}{2}$ is $3$, and that of $\frac{1\pm k}{2}$ is $-3$.  When everything is inseparable, leading coefficients will add (by the theory of formal groups), so that for example $\frac{1+i+j+k}{2} = 1 + \frac{-1+k}{2} + \frac{i+j}{2}$ has leading coefficient $1 + 3 -3I = -3-3I$.  The minimal polynomial is $x^2 -x +4$ having roots $-3 \pm 3I$.  In this way, we determine all our leading coefficients for the following division polynomials.

First, we consider endomorphisms of degree $2$, which are biased.  One can verify directly that $\frac{i+j}{2}(-I,0) = (i+j)(2I, \pm(5I+5)) = \mathcal{O}$.  Since $\frac{i+j}{2}$ must have just one primitive two-torsion point in the kernel, we find that
$\Psi_{\frac{i+j}{2}} = -9(x+I)$ (recall the divisor for biased division polynomials \eqref{eqn:Dphi}).  Similar reasoning obtains the other cases $\frac{\pm i \pm j}{2}$.
Observe that $i(i+j) = -1 + k$, so this also gives polynomials $\Psi_{\frac{\pm 1 \pm k}{2}}$, which are other unit multiples.

Next, endomorphisms of degree $3$, which are unbiased.  A computation reveals that $3I+6$ is the $x$-coordinate of the $3$-torsion point in the kernel of $1-i-j$, which acts exactly as $1 + \frac{i+j}{2}$ on the $3$-torsion.  Thus $\Psi_{1 + \frac{i+j}{2}}$ is $(1-3I)(x + 1 - 3I)$.  Observe that the minimal polynomial in this case is $x^2 - 2x + 3$ which has roots $1 \pm 3I$.  Similar computations give the division polynomials for other endomorphisms of degree $3$.

Finally, we consider cyclic endomorphisms of degree $4$, which are again biased.  
It will be helpful to compute, using V\'elu's formulas \cite{Velu}, the explicit isogenies
\begin{equation}
\begin{aligned}
  (1+i)(x,y) &= \left( 3I\frac{x^2+1}{x}, \frac{s^2y - y}{x^2} \right), \\
  \frac{i-j}{2}(x,y) &= \left( 3\frac{x^2-Ix-2}{x-I}, I\frac{s^2y +2Ixy + y}{x^2+2Ix-1} \right), \\
  \frac{i+j}{2}(x,y) &= \left( 3\frac{x^2+Ix-2}{x+I}, I\frac{s^2y -2Ixy + y}{x^2-2Ix-1} \right).
\end{aligned}
  \label{eqn:explicitiso}
\end{equation}

For the case of $\Psi_{\frac{3i \pm j}{2}}$, it must divide $\Psi_4$ and have a cyclic subgroup of $E[4]$ as the endomorphism kernel, since the minimal polynomial of $\frac{3i \pm j}{2}$ is $x^2 + 4$, and its leading coefficient is $I - 3I = -2I$.  We can compute its action on the $4$-torsion explicitly using the formulas above. 
The leading coefficient of the $T$-expansion of $\frac{3+k}{2}$ is $-2$, which satisfies the minimal polynomial $x^2 - 3x - 3 = (x+2)^2$.

The same type of computation determines $\frac{1+i+j+k}{2}$ etc.

With this, we can compute some generalized division polynomials:
\begin{align*}
  \Psi_{i} &= I, \quad \Psi_{j} = 1,  \quad \Psi_{1+i} = (1+I)^2x, \\
  \Psi_{\frac{i+j}{2}} &= -9(x+I), \quad
  \Psi_{\frac{i-j}{2}} = -9(x-I), \quad
  \Psi_{\frac{1-k}{2}} = -9I(x+I), \quad
  \Psi_{\frac{1+k}{2}} = -9I(x-I), \\
  \Psi_{1 + \frac{i+j}{2}} &= (1-3I)(x+1-3I), \quad
  \Psi_{1 - \frac{i+j}{2}} = (1+3I)(x-1-3I), \\
\Psi_{\frac{1+i+j+k}{2}} &=  (-3-3I)(1+I)x(x-1), \quad
\Psi_{\frac{-1+i+j-k}{2}} =  (3-3I)(1+I)x(x+1), \\
\Psi_{\frac{3+k}{2}} &=  -2(-3I)(x+I)(x-2I),
\Psi_{\frac{3i-j}{2}} = I \Psi_{\frac{3+k}{2}} = -2I(-3I)(x+I)(x-2I), \\
\Psi_{\frac{3i+j}{2}} &= -2I(-3I)(x-I)(x+2I). 
\end{align*}

For computation of $\widehat{\Psi}$, we fix the three $2$-isogenies $g_1 = 1+i$, $g_2 = \frac{i+j}{2}$, and $g_3 = \frac{i-j}{2}$.  
Now we can verify the recurrence relation with $\alpha = \frac{i+j}{2}$, $\beta = \frac{1+k}{2}$ and $\gamma = 1$:
\begin{align*}
		 \frac{\Psi_{\alpha + \beta} \Psi_{\alpha - \beta} \widehat{\Psi}_\alpha \widehat{\Psi}_\beta}{ \Psi_\alpha^2 \Psi_\beta^2  \widehat{\Psi}_{\alpha + \beta} } 
     &=
     \frac{ \Psi_{\frac{1+i+j+k}{2}} \Psi_{\frac{-1+i+j-k}{2}}\widehat{\Psi}_{\frac{i+j}{2}} \widehat{\Psi}_{\frac{1+k}{2}}}
     { \Psi_{\frac{i+j}{2}}^2 \Psi_{\frac{1+k}{2}}^2 \widehat{\Psi}_{\frac{1+i+j+k}{2}}  } \\
     &=
     \frac{(-3-3I)(1+I)x(x-1)(3-3I)(1+I)x(x+1)(-9)(x+I)(-9)(x-I)}
     {(-9)^2(x+I)^2 (-9I)^2(x-I)^2(1+I)^2x};\\
     &= \frac{x(x+1)(x+1)}{(x-I)(x-I)} \\
		 \frac{\Psi_{\beta + \gamma} \Psi_{\beta - \gamma} \widehat{\Psi}_\beta \widehat{\Psi}_\gamma}{ \Psi_\beta^2 \Psi_\gamma^2  \widehat{\Psi}_{\beta + \gamma} } 
     &=
      \frac{ \Psi_{\frac{3+k}{2}} \Psi_{\frac{-1+k}{2}}\widehat{\Psi}_{\frac{1+k}{2}} \widehat{\Psi}_1}
     { \Psi_{\frac{1+k}{2}}^2 \Psi_{1}^2 \widehat{\Psi}_{\frac{3+k}{2}}  }
     =
     \frac{-2(-3I)(x+I)(x-2I)(-9I)(x+I)(-9)(x-I)}
     {(-9I)^2(x-I)^2 (-9)(x+I)};\\
     &= 4 \frac{(x-2I)(x+I)}{(x-I)} \\
		 \frac{\Psi_{\gamma + \alpha} \Psi_{\gamma - \alpha} \widehat{\Psi}_\gamma \widehat{\Psi}_\alpha}{ \Psi_\gamma^2 \Psi_\alpha^2  \widehat{\Psi}_{\gamma + \alpha} } 
     &=
     \frac{ \Psi_{1+\frac{i+j}{2}} \Psi_{1-\frac{i+j}{2}}\widehat{\Psi}_{1} \widehat{\Psi}_{\frac{i+j}{2}}}
     { \Psi_{1}^2 \Psi_{\frac{i+j}{2}}^2 \widehat{\Psi}_{1+\frac{i+j}{2}}  }
     =
     \frac{(1-3I)(x+1-3I)(1+3I)(x-1-3I)(-9)(x+I)}
     {(-9)^2(x+I)^2} \\
     &= 2\frac{(x+1-3I)(x-1-3I)}{(x+I)}
\end{align*}
These three terms add to zero.

Now we can verify the recurrence in an inseparable case, with $\alpha = \frac{i+j}{2}$, $\beta = i$ and $\gamma = \frac{i-j}{2}$:
\begin{align*}
		 \frac{\Psi_{\alpha + \beta} \Psi_{\alpha - \beta} \widehat{\Psi}_\alpha \widehat{\Psi}_\beta}{ \Psi_\alpha^2 \Psi_\beta^2  \widehat{\Psi}_{\alpha + \beta} } 
     &=
     \frac{ \Psi_{\frac{3i+j}{2}} \Psi_{\frac{-i+j}{2}}\widehat{\Psi}_{\frac{i+j}{2}} \widehat{\Psi}_i}
     { \Psi_{\frac{i+j}{2}}^2 \Psi_{i}^2 \widehat{\Psi}_{\frac{3i+j}{2}}  }
     =
     \frac{-2I(-3I)(x-I)(x+2I)(9)(x-I)(-9)(x+I)}
     {(-9)^2(x+I)^2(I)^2(-2I)(-9)(x-I)};\\
     &= 3 \frac{(x-I)(x+2I)}{(x+I)} \\
		 \frac{\Psi_{\beta + \gamma} \Psi_{\beta - \gamma} \widehat{\Psi}_\beta \widehat{\Psi}_\gamma}{ \Psi_\beta^2 \Psi_\gamma^2  \widehat{\Psi}_{\beta + \gamma} } 
     &=
      \frac{ \Psi_{\frac{3i-j}{2}} \Psi_{\frac{i+j}{2}}\widehat{\Psi}_i \widehat{\Psi}_{\frac{i-j}{2}} }
     { \Psi_{i}^2 \Psi_{\frac{i-j}{2}}^2 \widehat{\Psi}_{\frac{3i-j}{2}}  }
     =
     \frac{-2I(-3I)(x+I)(x-2I)(-9)(x+I)(-9)(x-I)}
     {(-9)^2(x-I)^2(I)^2(-9)(x+I)};\\
     &= -3\frac{(x+I)(x-2I)}{(x-I)}, \\
		 \frac{\Psi_{\gamma + \alpha} \Psi_{\gamma - \alpha} \widehat{\Psi}_\gamma \widehat{\Psi}_\alpha}{ \Psi_\gamma^2 \Psi_\alpha^2  \widehat{\Psi}_{\gamma + \alpha} } 
     &=
     \frac{ \Psi_{i} \Psi_{-j} \widehat{\Psi}_{\frac{i-j}{2}} \widehat{\Psi}_{\frac{i+j}{2}}}
     { \Psi_{\frac{i-j}{2}}^2 \Psi_{\frac{i+j}{2}}^2 \widehat{\Psi}_{i}  }
     =
     \frac{(I)(-1)(-9)(x-I)(-9)(x+I)}
     {(-9)^2(x-I)^2(-9)^2(x+I)^2} \\
     &= -2I \frac{1}{(x-I)(x+I)}. 
\end{align*}
This sums to zero also.  
To double check the simplification of the third term, we can use the divisor:
\begin{align*}
  K_i + K_{j} - 2K_{\frac{i+j}{2}} - 2K_{\frac{i-j}{2}} &= (\mathcal{O}) + 7(\mathcal{O}) - 2( ((-I,0)) + (\mathcal{O}) )- 2( ((I,0)) + (\mathcal{O}) ) \\
                                                         &=   4(\mathcal{O}) -2((-I,0)) - 2((I,0)) \\
                                                         &= \div\left( \frac{1}{(x-I)(x+I)} \right).
\end{align*}
Finally, the third term, together with \eqref{eqn:explicitiso}, can be used to verify the relation to $x$ (Theorem~\ref{thm:relationtox}):
\[
 \frac{ \Psi_{i} \Psi_{j} \widehat{\Psi}_{\frac{i-j}{2}} \widehat{\Psi}_{\frac{i+j}{2}}}
     { \Psi_{\frac{i-j}{2}}^2 \Psi_{\frac{i+j}{2}}^2 \widehat{\Psi}_{i}  }
     = 2I \frac{1}{(x-I)(x+I)} = 3 \frac{x^2 - Ix - 2}{(x-I)} - 3 \frac{x^2 + Ix - 2}{(x+I)} = x' \circ \frac{i-j}{2}- x' \circ \frac{i+j}{2}.
\]

\section{Higher dimension}
\label{sec:higher}

The definitions and result for $\Psi_\phi$ can be extended to higher dimension, in the same fashion as for elliptic nets.
Let $\langle \phi, \phi' \rangle = \frac{1}{2} \left( \deg (\phi + \phi') - \deg \phi - \deg \phi' \right)$.
Define for any vector $\vec \phi = (\phi_1,\ldots, \phi_k)$ whose entries are isogenies $\phi_i: E \rightarrow E'$, an elliptic function $\Psi_{\vec \phi}$ on $E^k$ with divisor
\begin{equation}
	\label{eqn:phidiv}
	(\vec \phi \cdot \mathbf{P} = \mathbf{0})
	+ \sum_i (P_i = P_{\phi_i})
	- \sum_i (\deg \phi_i + 1) ({P}_i = \mathcal{O})
	- \sum_{i < j} \langle \phi_i, \phi_j \rangle \left( ({P}_i + {P}_j = \mathcal{O}) - (P_i = \mathcal{O}) - (P_j = \mathcal{O}) \right),
\end{equation}
and normalized in a manner similar, namely, where we denote by $\sigma$ the summation function $(P_1,\ldots,P_k) \mapsto P_1 + \cdots + P_k$, and by $\pi_i$ the projection onto the $i$-th component, and require
\begin{equation}
	\label{eqn:norm-arb-gen}
  \begin{split}
	&\frac{ 
	\Psi_{\vec \phi} 
\prod_i (t^{\deg \phi_i + \deg g_{\phi_i} - \sum_{j \neq i} \langle \phi_i, \phi_j \rangle} \circ \pi_i) 
	\prod_{i < j} t^{\langle \phi_i, \phi_j \rangle} \circ (\sigma \circ (\pi_i \times \pi_j)) 
	}{
(t' \circ \sigma \circ (\phi_1\times \cdots \times \phi_k))
	\prod_i (t_{\phi_i} \circ g_{\phi_i} \circ \pi_i) 
} (\mathcal{O})   \\
  &= \left( \frac{dt}{\omega} (\mathcal{O}) \right)^{\sum_i (\deg \phi_i + \deg g_{\phi_i}) - \sum_{i < j} \langle \phi_i, \phi_j \rangle}
	\left( \frac{dt'}{\omega'} (\mathcal{O}') \right)^{-1}
	\prod_i \left( \frac{dt_{\phi_i}}{\omega_{\phi_i}} (\mathcal{O}_{\phi_i}) \right)^{-1}.
\end{split}
\end{equation}

One can verify that, in each individual copy of $E$, the function above is an elliptic function.  
One can define
\[
	\widehat{\Psi}_{\vec \phi} = \prod_i \widehat{\Psi}_{\phi_i},
\]
and $\tilde{\Psi}_{\vec \phi}$ so $\widehat{\Psi}_{\vec \phi} \tilde{\Psi}_{\vec \phi} = \Psi_{\vec \phi}$.  
The formal group expansion becomes
\begin{equation*}
	\Psi_{\vec \phi}(T)
  = \left( \prod_i a_{\phi_i} a_{\iota(\phi_i)} \right) T^{-\sum_i (\deg \phi_i -\deg_{in} \phi_i+1) + \sum_{i < j} \langle \phi_i, \phi_j \rangle} + \cdots .
\end{equation*}
Lemma~\ref{lemma:quadrel} holds where the $\alpha$ are interpreted as vectors, and the relation to $x$ can be given as 
	\begin{equation*}
		\frac{
			\Psi_{\vec\alpha + \vec\beta} \Psi_{\vec\alpha - \vec\beta} \widehat{\Psi}_{\vec\alpha} \widehat{\Psi}_{\vec\beta}
		}{ 
		\Psi_{\vec\alpha}^2 \Psi_{\vec\beta}^2  \widehat{\Psi}_{\vec\alpha + \vec\beta} 
		} = x' \circ \sigma \circ \prod_i \beta_i - x' \circ \sigma \circ \prod_i \alpha_i.
	\end{equation*}
The first and second recurrence relations (Corollaries~\ref{cor:rec} and \ref{cor:rec2}) work out the same, where we interpret the indices as vectors of endomorphisms.

The final consideration is the chain rule, and one can show a version of the elliptic net chain rule for isogenies.  
Let $T: E''^k \rightarrow E^k$ be a linear transformation.  Then we have $T^{tr}: \Hom(E, E')^k \rightarrow \Hom(E'', E')^k$.

\begin{theorem}[First chain rule in higher dimension]
  Let $T$ be as above.
  Let $\vec \phi \in \Hom(E,E')^k$, i.e. $\phi_i: E \rightarrow E'$.
	Then whenever all coordinate isogenies in the subscripts are unbiased, we have
\[
	(\Psi_{\vec \phi} \circ T)
	\prod_{i=1}^k \Psi_{T^{tr}(\mathbf{e}_i)}^{\deg \phi_i - \sum_{j \neq i} \langle \phi_i, \phi_j \rangle}
	\prod_{i < j} \Psi_{T^{tr}(\mathbf{e}_i+\mathbf{e}_j)}^{\langle \phi_i, \phi_j \rangle}
	=
	\Psi_{T^{tr}(\vec \phi)}.
\]
In general, we have
\[
	\left(
	\frac{
	(\Psi_{\vec \phi} \circ T)
	\prod_{i=1}^k \Psi_{T^{tr}(\mathbf{e}_i)}^{\deg \phi_i - \sum_{j \neq i} \langle \phi_i, \phi_j \rangle}
	\prod_{i < j} \Psi_{T^{tr}(\mathbf{e}_i+\mathbf{e}_j)}^{\langle \phi_i, \phi_j \rangle}
	}{
	\Psi_{T^{tr}(\vec \phi)}
}\right)^2
	=
	\frac{
	(\widehat\Psi_{\vec \phi} \circ T)
	\prod_{i=1}^k \widehat\Psi_{T^{tr}(\mathbf{e}_i)}^{\deg \phi_i - \sum_{j \neq i} \langle \phi_i, \phi_j \rangle}
	\prod_{i < j} \widehat\Psi_{T^{tr}(\mathbf{e}_i+\mathbf{e}_j)}^{\langle \phi_i, \phi_j \rangle}
	}{
	\widehat\Psi_{T^{tr}(\vec \phi)}
}.
\]
\end{theorem}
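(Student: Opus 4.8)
The plan is to reproduce the kernel-function proof of the one-dimensional chain rule (Theorem~\ref{thm:chain}) on the product varieties $E^k$ and $E''^k$. The formalism of Section~\ref{sec:kernel} extends unchanged: a kernel symbol on $E^k$ is $(K_\psi)$ for a homomorphism $\psi\colon E^k\to E'$, its divisor is $\psi^*(\mathcal{O}')$, and the normalization \eqref{eqn:norm-arb-gen} presents each of $\Psi_{\vec\phi}$, $\widehat{\Psi}_{\vec\phi}$, $\Psi_{T^{tr}(\mathbf{e}_i)}$, $\Psi_{T^{tr}(\mathbf{e}_i+\mathbf{e}_j)}$ and $\Psi_{T^{tr}(\vec\phi)}$ as a kernel function for an explicit kernel symbol sum read off from its divisor \eqref{eqn:phidiv}. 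I write $\mathfrak{s}(f)$ for that symbol sum. The second (squared) identity is the fundamental one: the first is its specialization to the case where every subscript is unbiased, so that each $P_{\phi_i}=\mathcal{O}$, each $g_{\phi_i}$ is the identity, and every $\widehat{\Psi}$ equals $1$.

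First I would record the pullback compatibility. Writing $T$ as a matrix $(T_{ab})$ with $T_{ab}\in\Hom(E'',E)$, one computes $\vec\phi\cdot(T\mathbf{Q})=\sum_{a,b}\phi_a T_{ab}(Q_b)=T^{tr}(\vec\phi)\cdot\mathbf{Q}$, so that precomposition with $T$ carries the homomorphism $\vec\phi\cdot(-)$ to $T^{tr}(\vec\phi)\cdot(-)$ and, more generally, $(K_\psi)$ to $(K_{\psi\circ T})$. The proof of Lemma~\ref{lemma:pullback} then applies verbatim on products to show that $\Psi_{\vec\phi}\circ T$ and $\widehat{\Psi}_{\vec\phi}\circ T$ are kernel functions with symbol sums $T^*\mathfrak{s}(\Psi_{\vec\phi})$ and $T^*\mathfrak{s}(\widehat{\Psi}_{\vec\phi})$.

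Next I would compute the symbol sum of the entire left-hand product divided by $\Psi_{T^{tr}(\vec\phi)}$, namely
\[
T^*\mathfrak{s}(\Psi_{\vec\phi})+\sum_i\Big(\deg\phi_i-\sum_{j\neq i}\langle\phi_i,\phi_j\rangle\Big)\mathfrak{s}\big(\Psi_{T^{tr}(\mathbf{e}_i)}\big)+\sum_{i<j}\langle\phi_i,\phi_j\rangle\,\mathfrak{s}\big(\Psi_{T^{tr}(\mathbf{e}_i+\mathbf{e}_j)}\big)-\mathfrak{s}\big(\Psi_{T^{tr}(\vec\phi)}\big),
\]
and show that the part not supported on two-torsion --- the principal symbol $(K_{T^{tr}(\vec\phi)\cdot(-)})$ and the $(K_1)$-poles of \eqref{eqn:phidiv} --- cancels identically. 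This cancellation is exactly the identity underlying the elliptic net chain rule, now driven by the polarization identity for the quadratic form $\deg$ with polar form $\langle\cdot,\cdot\rangle$ together with the $\ZZ$-linearity of $T^{tr}$ and the compatibility $\vec\phi\cdot(T\,\cdot)=T^{tr}(\vec\phi)\cdot(\,\cdot\,)$ just recorded. What survives is supported on $E''[2]$ in each factor and consists precisely of the $g_{\phi_i}$-contributions to \eqref{eqn:phidiv}.

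Finally, squaring the left-hand ratio doubles this surviving two-torsion symbol sum, and I would verify that the result equals $\mathfrak{s}$ of the right-hand $\widehat{\Psi}$-expression, whose divisors carry the matching factor of $2$ by construction. By the product analogue of Lemma~\ref{lemma:invariant}, a kernel function supported on two-torsion is determined by its symbol sum, so the two sides agree, yielding the squared identity; in the unbiased case the surviving part is empty and the unsquared identity holds directly. The main obstacle is the bookkeeping of the previous paragraph: tracking how $T^*$ redistributes the pole and cross terms of \eqref{eqn:phidiv} among the correcting factors $\Psi_{T^{tr}(\mathbf{e}_i)}$ and $\Psi_{T^{tr}(\mathbf{e}_i+\mathbf{e}_j)}$, and confirming that the biased corrections $(P_i=P_{\phi_i})$, whose $g$-data transforms correctly only modulo two-torsion, leave behind exactly the $\widehat{\Psi}$-symbol sum after doubling. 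Everything else is a transcription of the one-dimensional argument.
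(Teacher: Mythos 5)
The paper states this theorem without proof (``one can show a version of the elliptic net chain rule for isogenies''), so there is no argument to compare against; your strategy --- kernel symbol sums on $E''^k$, pullback compatibility as in Lemma~\ref{lemma:pullback}, then matching of divisors and normalizations as in the proof of Theorem~\ref{thm:chain} --- is indeed the natural one. However, there is a genuine gap, and it sits exactly in the step you defer as ``bookkeeping.'' The cancellation of the symbols not supported on two-torsion is \emph{not} a formal consequence of the polarization identity and $\ZZ$-bilinearity. Writing $T=(T_{ab})$ with $T_{ab}\in\Hom(E'',E)$ and $\Phi_b=\sum_a\phi_a T_{ab}$, if you actually collect the coefficient of the cross symbol $(K_{\pi_b+\pi_c})$ (divisor $(Q_b+Q_c=\mathcal{O})$, $b\neq c$) in your symbol-sum difference and expand everything bilinearly, what remains is
\[
\sum_{i\neq j}\Bigl(\langle\phi_i T_{ib},\,\phi_j T_{jc}\rangle-\langle\phi_i,\phi_j\rangle\,\langle T_{ib},T_{jc}\rangle\Bigr),
\]
with a similar expression ($c=b$) for the pole symbols $(K_{\pi_b})$. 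The term-by-term identity $\langle\phi f,\psi g\rangle=\langle\phi,\psi\rangle\langle f,g\rangle$ that would kill this is false in general: on $E$ with $\ZZ[i]\subseteq\End(E)$, taking $\phi=f=1$, $\psi=g=i$ gives $\langle\phi f,\psi g\rangle=\langle 1,-1\rangle=-1$ while $\langle 1,i\rangle\langle 1,i\rangle=0$. The only formal statement is the symmetrized identity $\langle\phi f,\psi g\rangle+\langle\phi g,\psi f\rangle=2\langle\phi,\psi\rangle\langle f,g\rangle$, and the chain-rule sum pairs indices the wrong way for it to apply (the row index of $T$ is bound to the index of $\vec\phi$).

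What rescues the computation --- and what your proof never invokes --- is the hypothesis, implicit in the statement, that $\Psi_{\vec\phi}$ exists at all, i.e.\ that \eqref{eqn:phidiv} is principal. This forces the alignment condition $\hat\phi_i\phi_j=\hat\phi_j\phi_i$ (equivalently $\hat\phi_i\phi_j=\langle\phi_i,\phi_j\rangle\in\ZZ$), where $\hat\alpha$ denotes the dual isogeny; for instance for $\vec\phi=(1,i)$ the divisor \eqref{eqn:phidiv} has intersection number $2$ with the curve $\{P_1=iP_2\}$, hence is not principal and no $\Psi_{(1,i)}$ exists, which is precisely the case where the displayed sum above is nonzero. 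Under alignment the needed unsymmetrized identity does hold, by a trace-rotation argument: writing $\operatorname{tr}(\alpha)=\alpha+\hat\alpha$, one has $2\langle\phi_iT_{ib},\phi_jT_{jc}\rangle=\operatorname{tr}\bigl(\phi_iT_{ib}\widehat{T_{jc}}\hat\phi_j\bigr)=\operatorname{tr}\bigl((\hat\phi_j\phi_i)(T_{ib}\widehat{T_{jc}})\bigr)=\hat\phi_j\phi_i\cdot\operatorname{tr}\bigl(T_{ib}\widehat{T_{jc}}\bigr)=2\langle\phi_i,\phi_j\rangle\langle T_{ib},T_{jc}\rangle$, using $\operatorname{tr}(\alpha\beta)=\operatorname{tr}(\beta\alpha)$ and that $\hat\phi_j\phi_i$ is an integer, hence central. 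Supplying this hypothesis and this argument (or restricting $T$ to an integer matrix, where $\ZZ$-bilinearity genuinely suffices, as in the elliptic-net case of \cite{KateANT}) is the mathematical heart of the theorem, not bookkeeping. A lesser inaccuracy: the surviving biased corrections $(K_{g_{\phi_i}\circ\psi_i})-\deg g_{\phi_i}(K_{\psi_i})$, with $\psi_i:=T^{tr}(\mathbf{e}_i)\cdot(-)$, have divisors supported on $\psi_i^{-1}(E[2])$, not on the two-torsion of $E''^k$, so for that part the appeal to a product analogue of Lemma~\ref{lemma:invariant} must be replaced by the observation that both sides acquire these symbols identically through Lemma~\ref{lemma:pullback}; Lemma~\ref{lemma:invariant} is only appropriate for the genuinely two-torsion-supported symbols coming from the entries of $T$.
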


\begin{theorem}[Second chain rule in higher dimension]
	\label{thm:seconchain-higher}
  Let $T$ be as above.
	Suppose 
 $\sum_{\vec\alpha \in \Hom(E'',E')^k} e_{\vec\alpha} q(\vec\alpha) = 0$ is a finite integral quadratic identity.
Suppose $e_{\vec \alpha} = 0$ whenever $\alpha$ is not in the image of $T^{tr}$.
Then
	\[
		\left(\prod_{\vec \gamma \in \Hom(E,E')^k} \Psi^{e_{T^{tr}(\vec \gamma)}}_{\vec \gamma}
		\sqrt{\prod_{\vec\gamma \in \Hom(E,E')^k} \widehat{\Psi}^{e_{T^{tr}(\vec\gamma)}}_{\vec\gamma}}\right) \circ T
		=
		\prod_{\vec\gamma \in \Hom(E,E')^k} \Psi^{e_{T^{tr}(\vec\gamma)}}_{T^{tr}(\vec\gamma)}
		\sqrt{\prod_{\vec\gamma \in \Hom(E,E')^k} \widehat{\Psi}^{e_{T^{tr}(\vec\gamma)}}_{T^{tr}(\vec\gamma)}}.
	\]
\end{theorem}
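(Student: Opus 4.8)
The plan is to mirror the proof of the one-dimensional Second Chain Rule (Theorem~\ref{thm:secondchain}), replacing each of its three ingredients by its higher-dimensional counterpart. Concretely, I would show that both sides are well-defined kernel functions (now on $E''^k$) with the same underlying kernel symbol sum and the same scalar normalization, and then conclude equality from the higher-dimensional analogues of Lemmas~\ref{lemma:quadrel}, \ref{lemma:pullback}, and~\ref{lemma:independent}.

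First I would check that the square-root factors on both sides are defined. Because $T^{tr}$ is linear, $q \circ T^{tr}$ is a quadratic function of $\vec\gamma$ whenever $q$ is quadratic in $\vec\alpha$, and a linear reindexing carries each instance of the cube identity to another instance of the cube identity. Hence the hypothesized finite integral quadratic identity $\sum_{\vec\alpha} e_{\vec\alpha} q(\vec\alpha) = 0$ — together with the vanishing of $e_{\vec\alpha}$ off the image of $T^{tr}$, which restricts both products to a finite index set just as $\beta \neq 0$ does in the one-dimensional case — yields a finite integral quadratic identity $\sum_{\vec\gamma} e_{T^{tr}(\vec\gamma)} q'(\vec\gamma) = 0$ on $\Hom(E,E')^k$. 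The higher-dimensional form of Lemma~\ref{lemma:quadrel} then makes the left-hand inner factor a well-defined kernel function on $E^k$ with divisor $\sum_{\vec\gamma} e_{T^{tr}(\vec\gamma)}(\vec\gamma \cdot \mathbf{P} = \mathbf{0})$, and likewise makes the right-hand side a well-defined kernel function on $E''^k$ with divisor $\sum_{\vec\gamma} e_{T^{tr}(\vec\gamma)}(T^{tr}(\vec\gamma)\cdot\mathbf{P} = \mathbf{0})$.

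Next I would establish the pullback. The central computation is that, for $\mathbf{Q} \in E''^k$, the definition of $T^{tr}$ gives $\sum_i \gamma_i (T\mathbf{Q})_i = \sum_j (T^{tr}\vec\gamma)_j Q_j$, so that $T^*(\vec\gamma\cdot\mathbf{P} = \mathbf{0}) = (T^{tr}(\vec\gamma)\cdot\mathbf{P} = \mathbf{0})$ as divisors. Pulling back the left divisor termwise thus shows it equals the right divisor, so the two sides of the claimed identity have the same divisor. For the scalar normalization I would prove the higher-dimensional analogue of Lemma~\ref{lemma:pullback}: composition with the linear map $T$ sends a kernel function on $E^k$ to the kernel function for the pulled-back kernel symbol sum. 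As in the one-dimensional proof this reduces to matching leading coefficients of the formal expansions at the origin, with the higher-dimensional Lemma~\ref{lemma:independent} absorbing the dependence on the choices of invariant differential, and the degree-two convention~\eqref{eqn:convention} handling the $\widehat\Psi$ (square-root) part exactly as in Lemmas~\ref{lemma:squareroot} and~\ref{lemma:quadrel}.

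The main obstacle is this higher-dimensional pullback lemma for a full matrix $T$ rather than a single isogeny $\beta$: one must verify that the normalization condition~\eqref{eqn:norm-arb-gen}, which mixes the projections $\pi_i$, the summation map $\sigma$, and the degree-two isogenies $g_{\phi_i}$, transforms correctly under $\mathbf{Q} \mapsto T\mathbf{Q}$, and that the square-root factor — whose well-definedness rests on the reindexed quadratic identity of the previous step — pulls back compatibly. Once this normalized pullback statement is in hand, the coincidence of divisors and of normalizations forces equality of the two kernel functions, which is the desired identity.
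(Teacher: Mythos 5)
Your proposal is correct and takes essentially the same route the paper intends: the paper states this theorem without proof, deferring implicitly to the one-dimensional Second Chain Rule (Theorem~\ref{thm:secondchain}), which it says follows immediately from Lemmas~\ref{lemma:quadrel}, \ref{lemma:pullback} and~\ref{lemma:independent}, and your argument is precisely the vectorized version of that — the reindexed quadratic identity, the pullback computation $T^*(\vec\gamma\cdot\mathbf{P}=\mathbf{0})=(T^{tr}(\vec\gamma)\cdot\mathbf{P}=\mathbf{0})$, and matching of normalizations via the higher-dimensional pullback lemma. If anything, your sketch is more explicit than the paper about the two points it glosses over: that $T$ must be non-degenerate (playing the role of $\beta\neq 0$), and that the coefficient system $e_{T^{tr}(\vec\gamma)}$ is again a finite integral quadratic identity so that the square-root factors are well-defined.
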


\section{Declarations.}

\subsection*{Data Availability Statement.}  We do not analyse or generate any datasets.
\subsection*{Funding.}   The author was supported by NSF DMS-2401580 and a Joan and Joseph Birman Fellowship from the American Mathematical Society.
\subsection*{Competing Interests.}  The author has no relevant financial or non-financial interests to disclose.
 
     \bibliographystyle{plain}
     \bibliography{ellnet}

     \end{document}